\theoremstyle{thmstyleone}%
\newtheorem{theorem}{Theorem}
\theoremstyle{thmstyletwo}%
\theoremstyle{thmstylethree}%
\newtheorem{definition}{Definition}%
\begin{document}

\title[A Randomized Block Krylov Method for Tensor Train Approximation]{A Randomized Block Krylov Method for Tensor Train Approximation}


\author*[1]{\fnm{Gaohang} \sur{Yu}}\email{maghyu@163.com}

\author[1]{\fnm{Jinhong} \sur{Feng}}\email{fengjinhong0502@163.com}

\author[1]{\fnm{Zhongming} \sur{Chen}}\email{zmchen@hdu.edu.cn}
\author[2]{\fnm{Xiaohao} \sur{Cai}}\email{x.cai@soton.ac.uk}
\author[1,3]{\fnm{Liqun} \sur{Qi}}\email{liqun.qi@polyu.edu.hk}

\affil[1]{\orgdiv{Department of Mathematics}, \orgname{Hangzhou Dianzi University}, 
 \country{China}}

\affil[2]{\orgdiv{School of Electronics and Computer Science}, \orgname{University of Southampton}, 
\city{Southampton}, 
 \country{UK}}

\affil[3]{\orgdiv{Huawei Theory Research Lab}, 
\city{Hong Kong}, 
\country{China}}


\abstract{Tensor train decomposition is a powerful tool to tackle high-dimensional large-scale tensor data and is not suffering from the curse of dimensionality. It is based on the low-rank approximation of auxiliary unfolding matrices. To accelerate the calculation of the auxiliary unfolding matrices, some randomized algorithms have been proposed; however, they are not suitable for noisy data. The randomized block Krylov method is capable of dealing with heavy-tailed noisy data in low-rank approximation of matrices. In this paper, we propose a randomized algorithm for low-rank tensor train approximation of large-scale tensors based on randomized block Krylov subspace iteration and provide theoretical guarantees. Extensive numerical experiments on synthetic and real-world tensor data demonstrate the great performance of the proposed algorithm.	}

\keywords{Tensor, tensor train decomposition, randomized algorithm,  randomized block Krylov subspace iteration}

\pacs[MSC Classification]{68W20, 15A18, 15A69}

\maketitle

\section{Introduction}\label{sec1}
In many practical applications, such as machine learning, compressed sensing and data mining, data analysis generally plays the key role. Many large-scale datasets can be naturally expressed as tensors, often exhibiting low-rank structures. As a result, the low-rank tensor approximation has become a powerful tool for tensor data analysis \cite{ASLTucker,ASL2020,comon2002,kolda,czm2018,Zhao21}.

The main models for low-rank tensor approximation include the  CANDECOMP/PARAFAC (CP) decomposition \cite{comon2002}, Tucker decomposition \cite{kolda}, T-product\cite{kilmer2011}, tensor train (TT) decomposition \cite{Oseledets2011}, and tensor ring (TR) decomposition \cite{TR2016}, etc. CP decomposition is a useful way to represent a large tensor as the sum of a series of rank-1 tensors. Unfortunately, it is not reliable due to the difficulty of finding the optimal CP rank. Tucker decomposition is more stable than CP decomposition, but it is affected by the curse of dimensionality. In comparison, TT decomposition is not affected by the curse of dimensionality and is more reliable. This paper will focus exclusively on TT decomposition, which is becoming increasingly popular due to its stability and efficiency.

TT decomposition is based on the low-rank approximation of auxiliary unfolding matrices. As the computation of the singular value decomposition (SVD) of large-scale auxiliary unfolding matrices is a time-consuming task, various solutions have been proposed in literature. One of these solutions is the randomized algorithms \cite{B2017,Gu2014,Halko2011,Tropp2017,Tropp2019,Wang2015}, which have been shown to be effective in computing low-rank approximations of large-scale matrices. Randomized algorithms can significantly speed up decomposition and produce highly accurate results.

In recent years, numerous researchers have proposed randomized tensor decomposition methods \cite{STTA,Che2021,Che2020,DYQC2023,ASLTucker,QiL and Yu,SunY2020,Q2022,wolf2019,YuLi2022,YuLi2023,Jzhang2018,Lyuan2019,Zhao22}, showing the well-established effectiveness of randomized algorithms in various tensor decompositions.  Huber et al. \cite{Huber2017} proposed randomized TT decomposition based on a simple randomized SVD (TT-rSVD), which is much faster than the classical deterministic TT algorithm (i.e., TT-SVD) \cite{Oseledets2011}. To address unknown exact TT rank situations, Che et al. \cite{Che2018} proposed an adaptive randomized TT decomposition. To take advantage of parallelization, Shi et al. \cite{s2021} proposed parallelizable TT decompositions, including PSTT2 and PSTT2-onepass (which requires only one pass through the raw data). Daas et al.  proposed parallel algorithms for TT arithmetic \cite{Daas2020} and proposed randomized algorithms for rounding in the TT format \cite{Daas2023}. Kressner et al.  \cite{STTA} proposed streaming TT approximation using two-sided sketching to make high-dimensional data streamable and easy to implement in parallel. Li et al.  \cite{Li2022} proposed a fast TT for sparse tensors.


The randomized algorithm projects the original matrix onto a subspace using a random matrix, and power iteration can improve the accuracy of the approximation by reducing the tail energy \cite{Wang2015,Tropp2018,Gu2014}. However, the simultaneous power method only retains the highest-order term in the Krylov subspace, which limits its applicability and prevents it from handling the problem of under-sized singular value gaps. To address this issue, Musco et al. \cite{Musco2015} proposed the randomized block Krylov subspace iteration method (rBKI), which performs better in experiments and does not depend on singular value gaps. Moreover, the approximation can be as accurate as the truncated SVD for rapidly decaying singular values. Recently, Qiu et al. \cite{Q2022}  proposed a randomized Tucker decomposition based on block Krylov iteration and proved its validity on noisy data.

In TT decomposition, direct use of the random algorithm to approximate the auxiliary unfolding matrices will lead to excessive tail energy. The accumulated error will be larger than that of TT-SVD.
Given the advantage of randomized methods in reducing tail energy, in this paper, we propose a method based on randomized block Krylov methods for TT decomposition. Combining randomized block Krylov strategies, our proposed method can achieve more accurate TT approximation than existing standard randomized algorithms. Moreover, our method inherits the ability of the randomized block Krylov subspace iteration in matrices to handle noisy data and can achieve near-optimal approximated relative errors.
Extensive experiments in synthetic and real-world data demonstrate the excellent performance of the proposed randomized method for low-rank TT approximation.

The rest of the paper is organized as follows. Section \ref{Sec:pre} briefly introduces symbols, definitions, tensor basics, and recalls the related work. In Section \ref{Sec:our-method}, we present our main method and analyze its probabilistic error bound. Extensive numerical results and comparisons validating the great performance of our proposed method are given in Section \ref{Sec:results}. We conclude in Section \ref{Sec:con}.

\begin{table}[htbp]
	\centering
	\caption{Main notations.}
	\begin{tabular}{c|c}
		\hline
		Symbols & Notation\\
		\hline
		$x$  & scalar\\
		$X $ &  matrix\\
		$\mathcal{X}$ & tensor\\
		$\mathcal{X}^{(n)} $ & mode-$n$ unfolding matrix of $\mathcal{X}$\\
		$\times_n$ & mode-$n$ product of tensor and matrix \\
		$\times_n^m$ & mode-$(n,m)$ product of tensor and tensor \\
		$X^\top$ & transpose of $X$\\
		$X^{\dagger}$ & pseudo-inverse of $X$\\
		$ \Xi$  & expectation \\
		\hline
	\end{tabular}%
	\label{tab:notation}%
\end{table}%

\section{{Preliminary}} \label{Sec:pre}
\subsection{{Notations and basic operations}}
The main symbols used in this paper are shown in Table \ref{tab:notation}.
The mode-$n$ product of tensor $\mathcal{A}\in \mathbb{R}^{I_1\times I_2\times \cdots\times I_N} $ by a matrix $B \in \mathbb{R}^{J_n \times I_n}$ is a tensor $\mathcal{C}\in \mathbb{R}^{I_1\times \cdots \times I_{n-1}\times J_n \times I_{n+1} \times \cdots\times I_N} $, denoted by $\mathcal{C} = \mathcal{A} \times_n B $.
The tensor-tensor product of two tensors  $\mathcal{A}\in \mathbb{R}^{I_1\times I_2\times \cdots\times I_N} $  and $\mathcal{B}\in \mathbb{R}^{J_1\times J_2\times \cdots\times {J_M}}$ with common modes $I_n = J_m$ produces an $(M+N-2)$-th order tensor $\mathcal{C}\in \mathbb{R}^{I_1\times \cdots \times I_{n-1}\times  I_{n+1} \times \cdots \times I_N \times  J_1\times \cdots \times J_{m-1}\times  J_{m+1} \times \cdots \times J_M}$, i.e.,
\begin{equation*}
	\mathcal{C} = \mathcal{A} \times_n^m \mathcal{B}.
\end{equation*}
The Frobenius norm of a tensor$\mathcal{A}$ is given by $\left \| \mathcal{A} \right \|_F  = \sqrt{\left \langle \mathcal{A}  ,\mathcal{A}   \right \rangle}$.

\begin{definition}{(Matricization \cite{Huber2017})}
	The $\alpha$-Matricization is defined as \begin{equation} \hat{M}_{\alpha}: \mathbb{R}^{I_1 \times I_2\times \cdots \times I_N  } \to \mathbb{R}^{m_{\alpha} \times m_{\beta}},
    \end{equation}
 where $ m_{\alpha} = I_1 \times I_2 \times \cdots \times I_{i-1}$ and $m_{\beta}  = I_i \times I_{i+1} \times \cdots \times I_N$.
\end{definition}

\begin{definition}{(TT decomposition \cite{Oseledets2011})}
	Given a tensor $\mathcal{A}\in \mathbb{R}^{I_1 \times I_2\times \cdots\times I_N}$, the TT decomposition of $\mathcal{A}$ with ${\rm rank}_{\rm TT}=\textbf{r}=(r_1,r_2,\cdots,r_{N-1})$ is expressed as
	\begin{equation}
		\mathcal{A} \approx \mathcal{Q}_{1} \times_3^1 \mathcal{Q}_{2}\times_3^1\cdots\times_3^1\mathcal{Q}_{N},
	\end{equation}
	where $ \mathcal{Q}_n \in \mathbb{R}^{r_{n-1} \times I_n \times r_n}$ is a factor tensor for $n=1,2,\ldots,N$ and $r_0 = r_N =1 $.
\end{definition}

\begin{definition}{(Tail energy)} The $j$-th tail energy of a matrix $X$ is defined as	
	\begin{equation}
		\tau_j^2(X)  = \min_{{\rm rank}(Y) <j } \left \| X-Y \right \|_F^2= \sum_{i \geq j } \sigma_i^2(X),
	\end{equation}
	where each $\sigma_i$ is a singular value.	
\end{definition}

\subsection{TT-SVD}
The classical TT decomposition (i.e., TT-SVD) was proposed by Osledets \cite{Oseledets2011}. It was implemented based on the truncated SVD of the auxiliary unfolding matrix using precision to control the truncation parameters. The TT-SVD algorithm is given in Algorithm \ref{alg:tt-svd}. 

\begin{algorithm}
	\caption{TT-SVD \cite{Oseledets2011}}
	\label{alg:tt-svd}
	\begin{algorithmic}[1]
		\Require Tensor $\mathcal{A} \in\mathbb{R}^{I_1\times I_2\times\cdots\times I_N}$ and the prescribed accuracy $\epsilon$.
		\Ensure  Cores
		$ \mathcal{Q}_{1},\mathcal{Q}_{2},\cdots,\mathcal{Q}_{N}$
		of the TT approximation $\mathcal{B}$ to $\mathcal{A}$ in TT-format  satisfying
		$\left\| \mathcal{A}-\mathcal{B} \right \|_F \leq \epsilon \left\| \mathcal{A} \right \|_F $.
		\State Initialization: truncation parameter $\delta =\frac{\epsilon}{\sqrt{N-1}} \left\| \mathcal{A} \right \|_F$, $C=\mathcal{A}^{(1)}, r_0=1$.
		
		\For{$n=1,2,\cdots,N-1$}
		\State $C ={\tt reshape}(C,[r_{n-1}I_n,I_{n+1} I_{n+2}\cdots  I_{N}])$.
		\State Compute the $\delta$-truncated SVD: $C = USV+E, \left\| E\right\|_F \leq \delta , r_n = {\rm rank}(S)$.
		
		\State Form the factor tensor $ \mathcal{Q}_{n} = {\tt reshape}(U,[r_{n-1},I_n,r_n])$.
		\State Update $ C  = SV^\top $.
		\EndFor         
		
		\State Form the last factor tensor with $r_N=1 $, $ \mathcal{Q}_{N} = {\tt reshape}(C,[r_{N-1},I_N,r_N])$.
		\State \textbf{return} $\mathcal{Q}_{1},\mathcal{Q}_{2},\cdots,\mathcal{Q}_{N}$.
	\end{algorithmic}
\end{algorithm}

Let $I = \max\{I_1, \ldots, I_{N}\}$ and $r= \max\{r_1, \ldots, r_{N-1} \}$. The computational complexity of TT-SVD is dominated by the $(N-1)$ matrix SVD, i.e., $\mathcal{O}(I^{N+1} +r^2 \sum_{i=1}^{N-3}I^{N+1-i} + r I^3)$,
which is still of exponential order. Unfortunately, tensors with e.g. sparse structure do not bring significant improvement in efficiency because the structure information is usually lost after the first SVD. As shown in the following Theorem \ref{thmSketch}, errors in TT decomposition are accumulated.

\begin{theorem} [Theorem 4.3 \cite{Oseledets2011}] \label{thmSketch}
	Suppose that the unfoldings $A_k$ of the tensor $ \mathcal{A}$ satisfy
	\begin{equation}
		A_k = R_k + E_k, \ {\rm rank}(R_k) = r_k, \ \left\| E_k \right \|_F =\epsilon_k, \ k = 1,2,\cdots,N-1.
	\end{equation}
	Then TT-SVD computes a tensor $ \mathcal{B}$  with ${\rm rank}_{\rm TT}=(r_1,r_2,\cdots,r_{N-1})$ in the TT-format and
	\begin{equation}
		\left\| \mathcal{A}-\mathcal{B} \right \|_F \leq \sqrt{\sum_{k=1}^{N-1}\epsilon_k^2}.
	\end{equation}
\end{theorem}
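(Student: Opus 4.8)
The plan is to argue by induction on the tensor order $N$, peeling off one TT-core per step, and to combine the per-step errors by an orthogonality (Pythagorean) identity rather than a triangle inequality---this is precisely what turns the naive bound $\sum_k\epsilon_k$ into the stated $\sqrt{\sum_k\epsilon_k^2}$. For the base case $N=2$, TT-SVD reduces to a single rank-$r_1$ truncated SVD of $A_1$, so I would invoke the Eckart--Young theorem: since $A_1=R_1+E_1$ with $\mathrm{rank}(R_1)=r_1$, the best rank-$r_1$ approximation $\hat{A}_1$ of $A_1$ satisfies $\|A_1-\hat{A}_1\|_F\le\|A_1-R_1\|_F=\epsilon_1$.

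For the inductive step I would first run one iteration of Algorithm~\ref{alg:tt-svd} on $\mathcal{A}$: take the rank-$r_1$ truncated SVD $\hat{A}_1=U_1S_1V_1^{\top}$ of $A_1=\mathcal{A}^{(1)}$, form the first core $\mathcal{Q}_1$ from $U_1$, and note that $U_1^{\top}A_1=S_1V_1^{\top}$ exactly, because the discarded singular directions are orthogonal to the columns of $U_1$. Hence the matrix carried into the next iteration is a reshaping of the tensor $\mathcal{W}:=\mathcal{A}\times_1 U_1^{\top}$, and---after merging its first two modes---the remaining iterations of TT-SVD on $\mathcal{A}$ are exactly TT-SVD applied to $\mathcal{W}$ as an $(N-1)$-way tensor. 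The crucial lemma is that $\mathcal{W}$ again satisfies the hypotheses of the theorem, with the shifted error list $\epsilon_2,\dots,\epsilon_{N-1}$: for $j=1,\dots,N-2$, the $j$-th unfolding $W_j$ of $\mathcal{W}$ equals $P_j^{\top}A_{j+1}$, where $P_j=U_1\otimes I_{I_2\cdots I_{j+1}}$ has orthonormal columns because contracting mode $1$ with $U_1^{\top}$ commutes with grouping the leading modes into the row index. Writing $A_{j+1}=R_{j+1}+E_{j+1}$ then splits $W_j$ into the term $P_j^{\top}R_{j+1}$, of rank at most $r_{j+1}$, and the error $P_j^{\top}E_{j+1}$, whose Frobenius norm is at most $\|E_{j+1}\|_F=\epsilon_{j+1}$ since $P_j^{\top}$ is a contraction. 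The inductive hypothesis applied to $\mathcal{W}$ then yields cores $\mathcal{Q}_2,\dots,\mathcal{Q}_N$ assembling into a tensor $\mathcal{C}$ with $\|\mathcal{W}-\mathcal{C}\|_F^2\le\sum_{k=2}^{N-1}\epsilon_k^2$, and the full output is $\mathcal{B}=\mathcal{C}\times_1 U_1$.

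To finish, I would split the residual orthogonally. Since $\mathcal{B}^{(1)}=U_1\tilde{C}$ for the corresponding unfolding $\tilde{C}$ of $\mathcal{C}$, and since $U_1^{\top}(A_1-\hat{A}_1)=0$ while $U_1^{\top}U_1=I$, one gets
\[
\|\mathcal{A}-\mathcal{B}\|_F^2=\|A_1-U_1\tilde{C}\|_F^2=\|A_1-\hat{A}_1\|_F^2+\|S_1V_1^{\top}-\tilde{C}\|_F^2=\|A_1-\hat{A}_1\|_F^2+\|\mathcal{W}-\mathcal{C}\|_F^2,
\]
which is at most $\epsilon_1^2+\sum_{k=2}^{N-1}\epsilon_k^2=\sum_{k=1}^{N-1}\epsilon_k^2$; the assertion on $\mathrm{rank}_{\rm TT}$ is read off directly from the construction. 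I expect the main obstacle to be the crucial lemma together with its index bookkeeping: stating the unfolding identity $W_j=P_j^{\top}A_{j+1}$ precisely under the fixed matricization and Kronecker conventions, and checking carefully that the post-first-step iterations of TT-SVD on $\mathcal{A}$ really do coincide with TT-SVD on $\mathcal{W}$ despite the shift in mode sizes. The remaining ingredients---Eckart--Young, the contraction estimate, and the Pythagorean identity---are routine.
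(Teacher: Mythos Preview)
The paper does not supply its own proof of this statement: Theorem~\ref{thmSketch} is quoted verbatim from Oseledets~\cite{Oseledets2011} (there as Theorem~2.2) and stated without argument, serving only as background for the later error analysis. Your proposal is correct and is in fact the standard proof from~\cite{Oseledets2011}: induction on the order, one step of truncated SVD controlled by Eckart--Young, the observation that the residual tensor $\mathcal{W}=\mathcal{A}\times_1 U_1^{\top}$ inherits low-rank-plus-error structure on its unfoldings via the orthonormal factor $U_1\otimes I$, and the Pythagorean split $\|A_1-U_1\tilde{C}\|_F^2=\|A_1-\hat{A}_1\|_F^2+\|S_1V_1^{\top}-\tilde{C}\|_F^2$ enabled by $U_1^{\top}(A_1-\hat{A}_1)=0$. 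The index bookkeeping you flag as the main obstacle is exactly where Oseledets spends the most lines, but it goes through as you describe once the unfolding convention is fixed.
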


\subsection{Randomized TT-SVD}
Although TT decomposition can alleviate the curse of dimensionality, the cost of calculating the auxiliary matrix is still of exponential order, mainly due to the SVD of large-scale matrices. Randomized algorithms can significantly accelerate this process. Huber et al. \cite{Huber2017} proposed a randomized algorithm for TT decomposition, i.e., TT-rSVD, see Algorithm \ref{alg:tt-rsvd}. The matlab function ${\tt qr}(Y,0)$ produces the ``economy size" QR decomposition.

\begin{algorithm}
	\renewcommand{\algorithmicrequire}{\textbf{Input:}}
	\renewcommand{\algorithmicensure}{\textbf{Output:}}
	\caption{ Randomized TT decomposition (TT-rSVD)\cite{Huber2017}}
	\label{alg:tt-rsvd}
	\begin{algorithmic}[1]
		\Require Tensor $\mathcal{A} \in\mathbb{R}^{I_1\times I_2\times\cdots\times I_N}$, target rank $\mathbf{r}= [r_1,r_2,\cdots ,r_{N-1}]$, oversampling parameter $p$, and $r_0 =1$.
		\Ensure $\mathcal{Q}_{1},\mathcal{Q}_{2},\cdots,\mathcal{Q}_{N}$.

		\For{$n=1,2,\cdots,N-1$}
		\State $A = {\tt reshape}(\mathcal{A},[r_{n-1}I_n,I_{n+1} I_{n+2}\cdots  I_{N}])$.
		\State Create random Gaussian matrix $\Omega \in R^{I_{n+1} I_{n+2}\cdots  I_{N} \times (r_n+p)} $.
		\State Calculate $Y = A \Omega$.
		\State Calculate $[Q,\sim] = {\tt qr}(Y,0)$.
		
		\State $Q = Q(:,1:r_n)$.
		\State Form the factor tensor $ \mathcal{Q}_{n} = {\tt reshape}(Q,[r_{n-1},I_n,r_n])$.
		\State Update $ \mathcal{A}  = \mathcal{A} \times_1 Q^\top \in R^{r_n \times I_{n+1}\times\cdots\times I_N} $.
		\EndFor          
		\State Form the last factor tensor with $r_N=1 $, $ \mathcal{Q}_{N} = {\tt reshape}(\mathcal{A},[r_{N-1},I_N,r_N])$.
		\State \textbf{return} $\mathcal{Q}_{1},\mathcal{Q}_{2},\cdots,\mathcal{Q}_{N}$.
	\end{algorithmic}
\end{algorithm}

However, when using randomized methods to calculate TT decomposition, the size of the random matrix determines the generated error, and the error of each step will accumulate. The accumulation of errors can cause considerable tail energy and result in the loss of too much original information.
To address this problem, oversampling techniques can be employed in randomized algorithms. This technique involves using a larger random matrix to capture more information in the data matrix and then truncating it to improve the accuracy of randomized methods. The error bound of TT-rSVD is given in the following Theorem \ref{thm:tt-rsvd}.

\begin{theorem} [See \cite{Huber2017}] \label{thm:tt-rsvd}
	Given $\mathcal{A} \in R ^{I_1 \times I_2 \times \cdots \times I_N}$ and $s=r+p$ with $ p \geq 4$. For every $u,t \geq 1$, the error bound of TT-rSVD reads	
	\begin{equation}
		\left \| \mathcal{A} - P_{2,\cdots,N}(\mathcal{A}) \right\| \leq \sqrt{N-1} \eta(r,p) \min_{{\rm rank}_{\rm TT}(\mathcal{B}) \leq \textbf{r}  } { \left\| \mathcal{A}-\mathcal{B} \right \|},
	\end{equation}
	with probability at least $(1-5t^{-p}-2e^{-u^2/2})^{N-1}$, where $P_{2,\cdots,N}(\mathcal{A})= \mathcal{Q}_1 \times_3^1 \mathcal{Q}_2 \times_3^1 \cdots \times_3^1
	\mathcal{Q}_N$ and $\eta $ is given by
	\begin{equation}
		\eta  =  1+ t\sqrt{\frac{12r}{p}} +ut\frac{e \sqrt{s}}{p+1}.
	\end{equation}
\end{theorem}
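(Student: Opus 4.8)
The plan is to read TT-rSVD as the deterministic TT-SVD of Algorithm~\ref{alg:tt-svd} in which each $\delta$-truncated SVD of the running unfolding is replaced by a randomized range finder, and then to assemble three ingredients: a per-step probabilistic bound for randomized low-rank approximation of a single matrix; the error-accumulation mechanism behind Theorem~\ref{thmSketch}; and a comparison of the relevant tail energies with the best TT-rank-$\mathbf{r}$ approximation error. Throughout, let $C_n$ denote the matrix formed at the $n$-th sweep (a reshape of the already reduced tensor to size $r_{n-1}I_n\times I_{n+1}\cdots I_N$), $Q_n$ the orthonormal $r_n$-column factor extracted from $C_n\Omega_n$ with $\Omega_n$ an independent standard Gaussian matrix of $s=r+p$ columns, and $\epsilon_n:=\|(I-Q_nQ_n^\top)C_n\|_F$.

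First I would invoke the Frobenius-norm guarantee for randomized low-rank approximation of a matrix (Halko--Martinsson--Tropp \cite{Halko2011}): for every $u,t\ge 1$,
\begin{equation*}
\epsilon_n \;\le\; \Bigl(1+t\sqrt{\tfrac{3r}{p+1}}\Bigr)\,\tau_{r_n+1}(C_n) \;+\; ut\,\tfrac{e\sqrt{s}}{p+1}\,\sigma_{r_n+1}(C_n)
\end{equation*}
with failure probability at most $2t^{-p}+e^{-u^2/2}$. Using $\sigma_{r_n+1}(C_n)\le\tau_{r_n+1}(C_n)$ and $\sqrt{3r/(p+1)}\le\sqrt{12r/p}$, the right-hand side is bounded by $\eta(r,p)\,\tau_{r_n+1}(C_n)$ with $\eta$ exactly as in the statement, while the advertised failure bound $5t^{-p}+2e^{-u^2/2}$ is only weaker. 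Letting $\mathcal{E}_n$ be the event that this estimate holds at step $n$ and using the independence of $\Omega_1,\dots,\Omega_{N-1}$, one obtains $\Pr\bigl[\bigcap_{n=1}^{N-1}\mathcal{E}_n\bigr]\ge(1-5t^{-p}-2e^{-u^2/2})^{N-1}$, which is the probability in the statement.

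Next, on the event $\bigcap_n\mathcal{E}_n$, I would control the two contributions. Since $\mathrm{rank}(Q_nQ_n^\top C_n)\le r_n$, the telescoping (Pythagorean) argument behind Theorem~\ref{thmSketch} adapts immediately with the rank-$r_n$ subspaces $\mathrm{range}(Q_n)$ in place of the dominant singular subspaces and gives $\|\mathcal{A}-P_{2,\cdots,N}(\mathcal{A})\|_F^2\le\sum_{n=1}^{N-1}\epsilon_n^2$. For the tail energies, the tensor entering step $n$ is obtained from $\mathcal{A}$ by successive contractions against $Q_1^\top,\dots,Q_{n-1}^\top$, each of which acts on the relevant unfolding as left-multiplication by a matrix with orthonormal columns and hence cannot increase its singular values; therefore $\tau_{r_n+1}(C_n)\le\tau_{r_n+1}(\hat{M}_n(\mathcal{A}))$. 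Finally, for any $\mathcal{B}$ with $\mathrm{rank}_{\mathrm{TT}}(\mathcal{B})\le\mathbf{r}$ one has $\mathrm{rank}(\hat{M}_n(\mathcal{B}))\le r_n$, and since matricization preserves the Frobenius norm, $\tau_{r_n+1}(\hat{M}_n(\mathcal{A}))\le\|\hat{M}_n(\mathcal{A})-\hat{M}_n(\mathcal{B})\|_F=\|\mathcal{A}-\mathcal{B}\|_F$; minimizing over $\mathcal{B}$ yields $\tau_{r_n+1}(C_n)\le\min_{\mathrm{rank}_{\mathrm{TT}}(\mathcal{B})\le\mathbf{r}}\|\mathcal{A}-\mathcal{B}\|$. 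Chaining these bounds gives $\epsilon_n\le\eta(r,p)\min_{\mathrm{rank}_{\mathrm{TT}}(\mathcal{B})\le\mathbf{r}}\|\mathcal{A}-\mathcal{B}\|$ for every $n$, whence $\|\mathcal{A}-P_{2,\cdots,N}(\mathcal{A})\|\le\sqrt{N-1}\,\eta(r,p)\min_{\mathrm{rank}_{\mathrm{TT}}(\mathcal{B})\le\mathbf{r}}\|\mathcal{A}-\mathcal{B}\|$.

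The step I expect to be the real obstacle is the per-step matrix bound: pinning down the precise form of $\eta$ requires expressing the Gaussian matrix $\Omega_n$ in the right-singular-vector basis of $C_n$, splitting it into a ``signal'' block and a ``noise'' block, controlling the pseudoinverse of the first and the norm of the second, and then fusing an expectation estimate (through moments of Gaussian/Wishart matrices) with a large-deviation estimate (through concentration of Lipschitz functions of Gaussians). The accumulation identity and the non-expansiveness of the tail energy under the reduction steps are comparatively routine, but they still need care, since $C_n$ is a reshape of an already contracted tensor rather than of $\mathcal{A}$, and the reduction of the oversampled factor to $r_n$ columns in Algorithm~\ref{alg:tt-rsvd} must be carried out so that the benefit of oversampling is actually retained in the constant $\eta$.
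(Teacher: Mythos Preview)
The paper does not actually prove Theorem~\ref{thm:tt-rsvd}; it is quoted from \cite{Huber2017} with no argument supplied. The only proof in the paper that is directly comparable is that of Theorem~\ref{theorem:tt-rbki} (the TT-rBKI error bound), and your proposal mirrors that proof almost exactly: the same Pythagorean telescoping identity
\[
\|\mathcal{A}-P_{2,\cdots,N}(\mathcal{A})\|^2=\sum_{i=2}^{N}\|(I-Q_iQ_i^\top)B_{i+1}\|^2,
\]
the same appeal to a per-step randomized range-finder bound from \cite{Halko2011}, the same ``orthogonal projection can only decrease singular values'' step to pass from the reduced unfolding back to an unfolding of $\mathcal{A}$, and the same final comparison with $\min_{\mathrm{rank}_{\mathrm{TT}}(\mathcal{B})\le\mathbf{r}}\|\mathcal{A}-\mathcal{B}\|$. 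So your plan is correct and is essentially the paper's own template, simply with the plain Gaussian-sketch deviation bound in place of the power-iteration bound of Theorem~\ref{th:rsi}.

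One remark on the concern you raise at the end: the paper (and, as far as one can tell, \cite{Huber2017}) silently absorbs the truncation from $r_n+p$ to $r_n$ columns into the per-step bound without further justification, so your caution there is well placed but is not something the paper itself resolves.
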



The subspace power iteration technology \cite{Gu2014} is an effective method to enhance the accuracy of randomized methods. Replacing $A$ with $(AA^\top)^qA$ can make the singular values of the original matrix decay faster, resulting in smaller tail energy and improved accuracy of randomized methods as shown in the following theorem.

\begin{theorem}[Corollary 10.10 in \cite{Halko2011}] \label{th:rsi}
	Define $B = (AA^\top)^qA$. For a nonnegative integer q and the oversampling parameter p, generate a random matrix $\Omega $, construct the sample matrix $Z = B\Omega$, and calculate $[Q_Z,\sim] = {\tt qr}(Z,0)$. Let $P_Z = Q_ZQ_Z^\top$, and then
	\begin{equation}
		\Xi \left \|  (I-P_Z)A \right\| \leq
		\Big [ \big(1+ \sqrt{\frac{r}{p-1}} \big)\delta_{r+1} ^{2q+1}+\frac{e \sqrt{s}}{p}    \big ( \sum_{j>r} \delta_j ^{2(2q+1)} \big )^\frac{1}{2}    \Big]^\frac{1}{2q+1}.
	\end{equation}
	
\end{theorem}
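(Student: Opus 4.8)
The plan is to deduce the estimate from the average-case error analysis of the basic randomized range finder, after transferring everything from $A$ to the powered matrix $B=(AA^\top)^qA$. The three ingredients are: a spectral dictionary between $A$ and $B$, a deterministic norm inequality that ``undoes'' the power, and the known expectation bound for the range finder.

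First I would record the spectral relation between $A$ and $B$. Writing an SVD $A=U\,{\rm diag}(\delta_1,\delta_2,\dots)\,V^\top$, we have $B=(AA^\top)^qA=U\,{\rm diag}(\delta_1^{2q+1},\delta_2^{2q+1},\dots)\,V^\top$, so $B$ shares the left singular vectors of $A$ and has singular values $\delta_j^{2q+1}$. Consequently $\sigma_{r+1}(B)=\delta_{r+1}^{2q+1}$ and $\sum_{j>r}\sigma_j(B)^2=\sum_{j>r}\delta_j^{2(2q+1)}$, which are exactly the quantities appearing on the right-hand side of the claim.

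Second, I would establish the deterministic inequality $\|(I-P_Z)A\|\le\|(I-P_Z)B\|^{1/(2q+1)}$. Set $\Pi=I-P_Z$ and $M=AA^\top\succeq 0$. Using $\Pi^\top=\Pi$ and $\Pi^2=\Pi$, one has $\|\Pi A\|^2=\lambda_{\max}(\Pi M\Pi)$ and, since $BB^\top=M^{2q+1}$, $\|\Pi B\|^2=\lambda_{\max}(\Pi M^{2q+1}\Pi)$. Pick a unit vector $x_0\in{\rm range}(\Pi)$ attaining $x_0^\top M x_0=\lambda_{\max}(\Pi M\Pi)$, and apply Jensen's inequality to the (probability) spectral measure of $M$ associated with $x_0$: convexity of $t\mapsto t^{2q+1}$ on $[0,\infty)$ gives $x_0^\top M^{2q+1}x_0\ge(x_0^\top M x_0)^{2q+1}$, whence $\lambda_{\max}(\Pi M^{2q+1}\Pi)\ge\lambda_{\max}(\Pi M\Pi)^{2q+1}$. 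Taking square roots yields $\|\Pi A\|\le\|\Pi B\|^{1/(2q+1)}$. Then take expectations and invoke Jensen once more, now with the concave map $x\mapsto x^{1/(2q+1)}$, to obtain $\Xi\|(I-P_Z)A\|\le\bigl(\Xi\|(I-P_Z)B\|\bigr)^{1/(2q+1)}$.

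Third, since $Z=B\Omega$ and $Q_Z$ is an orthonormal basis of ${\rm range}(Z)$, the pair $(P_Z,B)$ is precisely the output of the basic randomized range finder applied to $B$; its average-case spectral-norm bound (valid for $p\ge2$) gives $\Xi\|(I-P_Z)B\|\le\bigl(1+\sqrt{r/(p-1)}\bigr)\sigma_{r+1}(B)+\tfrac{e\sqrt{s}}{p}\bigl(\sum_{j>r}\sigma_j(B)^2\bigr)^{1/2}$. Substituting the singular-value identities from the first step and raising to the power $1/(2q+1)$ produces exactly the stated estimate. The only step that is not pure bookkeeping is the deterministic norm inequality of the second paragraph; everything else is two applications of Jensen's inequality together with the standard range-finder bound.
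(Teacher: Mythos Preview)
Your argument is correct and is exactly the proof of Corollary~10.10 in Halko--Martinsson--Tropp: the deterministic power inequality (their Proposition~8.6), the concave Jensen step, and the average-case spectral bound for the basic range finder (their Corollary~10.9) applied to $B$. The present paper does not supply its own proof of this statement---it is simply quoted from \cite{Halko2011}---so there is nothing further to compare.
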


Musco et al. \cite{Musco2015} proposed an improved version of the simultaneous power iteration, a simple randomized block Krylov method, which gives the same guarantee in just $\mathcal{O}(\frac{1}{\sqrt{\epsilon}})$ iterations (recall that $\epsilon$ is the prescribed accuracy) and performs substantially better experimentally. In the following section, we propose our new randomized method based on the block Krylov iteration for TT decomposition.

\section{Proposed Block Krylov Iteration for TT  Approximation} \label{Sec:our-method}

We emphasize again that one big disadvantage of TT-rSVD is its limited accuracy and inability in tackling noisy data. To address this issue, we propose a new method called TT-rBKI based on the block Krylov subspace iteration for TT approximation. 

Our TT-rBKI method is summarized in Algorithm \ref{alg:tt-rbki}. Comparing Step 6 of Algorithm \ref{alg:tt-rbki} with Step 4 of Algorithm \ref{alg:tt-rsvd} (i.e., TT-rSVD), we can see that TT-rSVD directly utilizes a Gaussian random matrix to sketch the unfolding matrix $A$, while our TT-rBKI takes advantage of the Krylov subspace to create a larger random projection matrix $U$ based on the data. This means that the TT-rBKI method produces a larger sketch $Y$ through a data-related random projection operator. It is evident that the sketch obtained by this way ensures that it contains more principal components of the data than the sketch obtained by the TT-rSVD method. This plays a key role in improving the accuracy and stability of the method for the TT approximation. 
The error bound of the proposed TT-rBKI method is provided in Theorem \ref{theorem:tt-rbki} below.

	\begin{algorithm}
		\renewcommand{\algorithmicrequire}{\textbf{Input:}}
		\renewcommand{\algorithmicensure}{\textbf{Output:}}
		\caption{Randomized TT approximation based on  block Krylov subspace iteration (TT-rBKI)}
		\label{alg:tt-rbki}
		\begin{algorithmic}[1]
			\Require Tensor $\mathcal{A} \in\mathbb{R}^{I_1\times I_2\times\cdots\times I_N}$, target rank $\textbf{r} = [r_1,r_2,\cdots ,r_{N-1}]$, power iteration parameter $q$, oversampling parameter $p$, and $r_0 =1$.
			\Ensure $\mathcal{Q}_{1},\mathcal{Q}_{2},\cdots,\mathcal{Q}_{N}$.			
			
			\For{$n=1,2,\cdots,N-1$}
			\State $A = {\tt reshape}(\mathcal{A},[r_{n-1}I_n,I_{n+1} I_{n+2}\cdots  I_{N}])$.
			\State Create random Gaussian matrix $\Omega \in R^{I_{n+1} I_{n+2}\cdots  I_{N} \times (r_n+p)} $.
			\State Construct Krylov space $K = [A^\top A\Omega,(A^\top A)^2 \Omega,\cdots,(A^\top A)^q \Omega] $.
			\State Calculate $[U,\sim] = {\tt qr}(K,0)$, where
			$U \in \mathbb{R}^{I_{n+1} I_{n+2}\cdots  I_{N} \times q(r_n+p)}$.
			\State Calculate $Y = AU \in \mathbb{R}^{r_{n-1}I_n \times q(r_n+p)} $.
			\State Set $(Q,\sim) = {\tt qr}(Y,0)$.
            \State $Q = Q(:,1:r_n).$
			
			\State Form the factor tensor $ \mathcal{Q}_{n} = {\tt reshape}(Q,[r_{n-1},I_n,r_n])$.
			\State Update $ \mathcal{A}  = \mathcal{A} \times_1 Q^\top \in R^{r_n \times I_{n+1}\times\cdots\times I_N} $.
			\EndFor 
			\State Form the last factor tensor with $r_N=1 $, $ \mathcal{Q}_{N} = {\tt reshape}(\mathcal{A},[r_{N-1},I_N,r_N])$.
			\State \textbf{return} $\mathcal{Q}_{1},\mathcal{Q}_{2},\cdots,\mathcal{Q}_{N}$.
		\end{algorithmic}
	\end{algorithm}

\begin{theorem}[Error bound of TT-rBKI] \label{theorem:tt-rbki}
		Given $\mathcal{A} \in R ^{I_1 \times I_2 \times \cdots \times I_N}, s=r+p$ with $ p \geq 4$, and nonnegative integer $q$. For every $u \geq 1$, the error bound of TT-rBKI is		
		\begin{equation}
			\left \| \mathcal{A} - P_{2,\cdots,N}(\mathcal{A}) \right\| \leq \sqrt{N-1} \eta(r,p,q) \min_{{\rm rank}_{\rm TT}(\mathcal{B}) \leq \textbf{r}  } { \left\| \mathcal{A}-\mathcal{B} \right \|},
		\end{equation}
		with probability at least $(1-5t^{-p}-2e^{-u^2/2})^{N-1}$,	where the parameter $\eta $ is given as
		\begin{equation}
			\eta  = \Big [ \big(1+ \sqrt{\frac{r}{p-1}} +\frac{e \sqrt{s}}{p} \big)     \Big]^\frac{1}{2q+1}.
		\end{equation}
\end{theorem}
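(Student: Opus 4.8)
The plan is to mirror the argument behind Theorem~\ref{thm:tt-rsvd}: reduce the global tensor error to a sum of per-step matrix errors exactly as in Theorem~\ref{thmSketch}, and then replace the matrix-level randomized-SVD estimate used there by a block-Krylov estimate derived from Theorem~\ref{th:rsi}.

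\emph{Step 1 (reduction to the matrix case).} Let $A_k$ denote the matrix that is reshaped and sketched on the $k$-th pass of Algorithm~\ref{alg:tt-rbki} and $P_{Q_k}=Q_kQ_k^\top$ the orthogonal projector produced there. The map $P_{2,\dots,N}$ is the composition of these projectors along successive unfoldings, so the deterministic accumulation bound of Theorem~\ref{thmSketch} gives
\begin{equation*}
\left\|\mathcal{A}-P_{2,\dots,N}(\mathcal{A})\right\|\le\Big(\sum_{k=1}^{N-1}\epsilon_k^2\Big)^{1/2},\qquad \epsilon_k:=\left\|(I-P_{Q_k})A_k\right\|.
\end{equation*}
Since ${\rm range}(Q_k)$ has dimension $r_k$, a truncation argument identical to the one in the proof of Theorem~\ref{thm:tt-rsvd} shows $\min_{{\rm rank}(B)\le r_k}\|A_k-B\|\le M:=\min_{{\rm rank}_{\rm TT}(\mathcal{B})\le\mathbf r}\|\mathcal{A}-\mathcal{B}\|$ for every $k$. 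Hence, once we establish the per-step estimate $\epsilon_k\le\eta(r,p,q)\min_{{\rm rank}(B)\le r_k}\|A_k-B\|$ with the stated probability, summing over $k$ yields $\big(\sum_k\epsilon_k^2\big)^{1/2}\le\eta\sqrt{N-1}\,M$, which is the claim.

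\emph{Step 2 (per-step block-Krylov bound, the crux).} Fix a pass and drop the index $k$; let $\Omega$ be the Gaussian matrix, $K=[A^\top A\Omega,\dots,(A^\top A)^q\Omega]$, $U$ an orthonormal basis of ${\rm range}(K)$, $Y=AU$, and $Q$ the first $r$ columns of an orthonormal basis of ${\rm range}(Y)$. Because $(A^\top A)^q\Omega\in{\rm range}(K)$, we have ${\rm range}(Y)={\rm range}(AU)=A\big({\rm range}(K)\big)\supseteq{\rm range}\big((AA^\top)^qA\Omega\big)$, i.e.\ the block-Krylov range contains the range of the simple $q$-step power-iteration sketch $Z:=(AA^\top)^qA\Omega$ of Theorem~\ref{th:rsi}. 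Applying Theorem~\ref{th:rsi} to $Z$ (together with its high-probability refinement: the Gaussian tail estimates of Halko--Martinsson--Tropp that also underlie Theorem~\ref{thm:tt-rsvd} and produce the factors $5t^{-p}$ and $2e^{-u^2/2}$), and then collapsing the two-term right-hand side via the elementary inequality $\sum_j a_j^{m}\le\big(\sum_j a_j\big)^m$ for $m=2q+1\ge1$ applied with $a_j=\sigma_j^2(A)$, the bracket in Theorem~\ref{th:rsi} becomes $\big(1+\sqrt{r/(p-1)}+e\sqrt{s}/p\big)^{1/(2q+1)}\big(\sum_{j>r}\sigma_j^2(A)\big)^{1/2}$, which is exactly $\eta(r,p,q)\min_{{\rm rank}(B)\le r}\|A-B\|$. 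Finally one passes from the projector onto ${\rm range}(Z)$ to the truncated projector $P_Q$ of the algorithm exactly as in Theorem~\ref{thm:tt-rsvd}; since ${\rm range}(Y)$ contains the sketch space used there, the constant $\eta$ is retained.

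\emph{Step 3 (conclusion).} The $N-1$ passes use independent Gaussian matrices, so the events on which Step~2 holds are independent, each of probability at least $1-5t^{-p}-2e^{-u^2/2}$; intersecting them gives probability at least $(1-5t^{-p}-2e^{-u^2/2})^{N-1}$, on which Step~1 delivers the asserted bound. The main obstacle is Step~2: obtaining the \emph{high-probability} (rather than merely in-expectation) block-Krylov estimate with precisely the constant $\eta(r,p,q)$ and the stated failure probability, carefully matching the norm used in the accumulation bound of Theorem~\ref{thmSketch}, and rigorously controlling the effect of truncating the block-Krylov basis down to $r$ columns.
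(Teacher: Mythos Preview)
Your proposal is correct and follows essentially the paper's own argument: decompose the global error as a sum of per-step projection errors (the paper obtains this by a direct Pythagorean expansion of $\|\mathcal{A}\|^2-\|P_{2,\dots,N}(\mathcal{A})\|^2$ rather than by citing Theorem~\ref{thmSketch}, but the resulting identity $\|\mathcal{A}-P_{2,\dots,N}(\mathcal{A})\|^2=\sum_i\|(I-Q_iQ_i^\top)B_{i+1}\|^2$ is the same), bound each term via the power-iteration estimate of Theorem~\ref{th:rsi}, simplify the bracket to extract $\eta$, and then use that orthogonal projection only decreases singular values to pass to the best TT-rank approximant. The paper is in fact less explicit than you are on two points you flag: it simply asserts that ``block Krylov iteration has the same error bound as the simultaneous iteration'' without your range-containment argument, and it applies Theorem~\ref{th:rsi} (an expectation bound) directly without separately invoking the high-probability refinement needed for the stated failure probability.
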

	
\begin{proof}	
		For syntactical convenience, we define $B_i =(\hat{M}_{1,\cdots,{i-1}}(A) )$ and use orthogonality from right to left. Since
		$P_{2,\cdots,N}(X) $
		is an orthogonal projector, we have
		\begin{equation}
			\begin{aligned}
				\left\| \mathcal{A}-P_{2,\cdots,N}(\mathcal{A}) \right \|^2 &=\left\| \mathcal{A} \right \|^2-\left\| P_{2,\cdots,N}(\mathcal{A}) \right \|^2\\
				&=\left\| \mathcal{A} \right \|^2-\langle B_2,B_2 \rangle\\
				&=\left\| \mathcal{A} \right \|^2-\langle Q_2 B_3,Q_2 B_3\rangle\\
				&=\left\| \mathcal{A} \right \|^2-\langle B_3,Q_2^\top Q_2 B_3\rangle\\
				&=\left\| \mathcal{A} \right \|^2-\langle B_3,B_3-(I-Q_2^\top Q_2)B_3\rangle\\
				&=\left\| \mathcal{A} \right \|^2 -\left\| B_3 \right \|^2     + \langle B_3,(I-Q_2^\top Q_2)B_3\rangle\\
				&=\left\| \mathcal{A} \right \|^2 -\left\| B_3 \right \|^2     +\left\| (I-Q_2^\top Q_2)B_3 \right \|^2.
			\end{aligned}
		\end{equation}
		Then, iteratively, we can obtain	
		\begin{equation} \label{eq:12}
            \begin{aligned}
			\left\| \mathcal{A}-P_{2,\cdots,N}(\mathcal{A}) \right \|^2 & = \left\| \mathcal{A} \right \|^2-\left\| B_{N+1} \right \|^2 +\sum_{i=2}^{N}\left\| (I-Q_i^\top Q_i)B_{i+1} \right \|^2 \\
            & = \sum_{i=2}^{N}\left\| (I-Q_i^\top Q_i)B_{i+1} \right \|^2,
            \end{aligned}
		\end{equation}
		where the last step is obtained by using the fact that $B_{N+1}$ has the same norm as $\mathcal{A}$. Since $Q_i$ is obtained by the block Krylov iteration, which has the same error bound as the simultaneous iteration, by using Theorem \ref{th:rsi}, we have
		\begin{equation}
			\begin{aligned}
				& \ \left\| (I-Q_i^\top Q_i)B_{i+1} \right \|^2 \\
				\leq & \  \Bigg [ \Big [ \big(1 \!+\! \sqrt{\frac{r}{p-1}} \big) \delta_{j+1}^{2q+1}(B_{i+1}) \!+\! \frac{e \sqrt{s}}{p} \big ( \sum_{j>r} \delta_j ^{2(2q+1)}(B_{i+1}) \big )^\frac{1}{2}    \Big]^\frac{1}{2q+1} \Bigg] ^2\\
				\leq & \  \Bigg[ \Big [ \big(1 \!+\! \sqrt{\frac{r}{p-1}} +\frac{e \sqrt{s}}{p} \big)   \big ( \sum_{j>r} \delta_j ^{2(2q+1)}(B_{i+1}) \big )^\frac{1}{2}    \Big]^\frac{1}{2q+1} \Bigg]^2\\
				\leq & \  \eta ^2  \sum_{j>r} \delta_j ^2(B_{i+1}) .
			\end{aligned}
		\end{equation}
		
		As shown by Halko \cite{Halko2011}, the application of an orthogonal projection can only decrease the singular values. It follows that
		\begin{equation} \label{eq:14}
			\begin{aligned}
				\left\| (I-Q_i^\top Q_i)B_{i+1} \right \|^2  &\leq \eta ^2  \sum_{j>r} \delta_j ^2(B_{i+1}) \\
				& \leq \eta ^2 \min_{{\rm rank}_{\rm TT}(\mathcal{B} ) \leq r  } \left \| \mathcal{A}-\mathcal{B} \right \|^2.
			\end{aligned}
		\end{equation}
		By combining Eq. \eqref{eq:12} and Eq. \eqref{eq:14}, this completes the proof.
\end{proof}

For comparison purpose, we also provide a simple version of TT-rBKI with basic subspace power iteration, called TT-rSI, which is summarized in Algorithm \ref{alg:tt-rsi}. The TT-rSI method has a lower computational complexity than the TT-rBKI method (see Table \ref{tab2}), inasmuch as it only uses the information associated with the highest power in the Krylov subspace. On the other hand, this also makes it less capable of dealing with noisy data than the TT-rBKI method, as demonstrated by extensive numerical experiments in Section \ref{Sec:results}.

\begin{table}[h]
		\centering
		\caption{Computational complexity of methods TT-SVD, TT-rSVD, TT-rSI and TT-rBKI.} \label{tab2}
		\begin{tabular}[htbp]{cc}		
			\hline
			Method & Computational Complexity\\
			\hline
			TT-SVD&$\mathcal{O}(I^{N+1} +\sum_{i=1}^{N-1}r^2I^{N-i})$\\
			TT-rSVD&$\mathcal{O}(NsI^{N} )$\\
			TT-rSI&$\mathcal{O}(I^Nsq +\sum_{i=1}^{N-1}rI^{N-i}sq)$\\
			TT-rBKI&$\mathcal{O}(I^{N}s^2q^2 +\sum_{i=1}^{N-1}rI^{N-i}s^2q^2)$\\
			\hline
		\end{tabular}
	\end{table}

As shown in Table \ref{tab2}, the computational complexity of the proposed TT-rBKI method and the TT-rSI method is $\mathcal{O}(I^{N}s^2q^2 +\sum_{i=1}^{N-1}rI^{N-i}s^2q^2)$ and $\mathcal{O}(I^Nsq +\sum_{i=1}^{N-1}rI^{N-i}sq)$, respectively. %
Based on the work in \cite{Huber2017} and \cite{Q2022}, we know that the computational complexity of TT-rSVD is $\mathcal{O}(NsI^{N} )$, which is significantly reduced compared to TT-SVD. Among these three randomized algorithms (i.e., TT-rSVD, TT-rSI and TT-rBKI), TT-rBKI theoretically has slightly higher computational complexity. The extensive numerical experiments in Section \ref{Sec:results} below will demonstrate that the proposed TT-rBKI can achieve the best quality for TT approximation.

\begin{algorithm}
		\renewcommand{\algorithmicrequire}{\textbf{Input:}}
		\renewcommand{\algorithmicensure}{\textbf{Output:}}
		\caption{ Randomized TT approximation with subspace power iteration (TT-rSI)}
		\label{alg:tt-rsi}
		\begin{algorithmic}[1]
			\Require Tensor $\mathcal{A} \in\mathbb{R}^{I_1\times I_2\times\cdots\times I_N}$, iteration $q$, target rank $\textbf{r} = [r_1,r_2,\cdots ,r_{N-1}]$, oversampling parameter $p$, and $r_0 =1$.
			\Ensure $\mathcal{Q}_{1},\mathcal{Q}_{2},\cdots,\mathcal{Q}_{N}$.
			
			\For{$n=1,2,\cdots,N-1$}
			\State $A = {\tt reshape}(\mathcal{A},[r_{n-1}I_n,I_{n+1} I_{n+2}\cdots  I_{N}])$.
			
			\State Create random Gaussian matrix $\Omega \in R^{I_{n+1} I_{n+2}\cdots  I_{N} \times (r_n+p)} $.
			\State  $Y = A\Omega$.
			\State Calculate $[Q_0,\sim] = {\tt qr}(Y,0)$.
			
			\For{$j=1,2,\cdots,q$}
			\State $\hat{Y}_j$=$A^\top Q_{j-1}$.
			\State $(\hat{Q}_j ,\sim) = {\tt qr}(\hat{Y}_j,0)$.
			\State $Y_j =A\hat{Q}_j$.
			\State $(Q_j,\sim) = {\tt qr}(Y_j,0) $.
			\EndFor  
			\State $Q = Q(:,1:r_n)$.
			\State Form factor tensor $ \mathcal{Q}_{n} = {\tt reshape}(Q,[r_{n-1},I_n,r_n])$.
			\State Update $ \mathcal{A}  = \mathcal{A} \times_1 Q^\top \in R^{r_n \times I_{n+1}\times\cdots\times I_N} $.
			\EndFor  
			\State Form the last factor tensor with $r_N=1 $, $ \mathcal{Q}_{N} = {\tt reshape}(\mathcal{A},[r_{N-1},I_N,r_N])$.
			\State \textbf{return} $\mathcal{Q}_{1},\mathcal{Q}_{2},\cdots,\mathcal{Q}_{N}$.
		\end{algorithmic}
	\end{algorithm}	
 
\section{Numerical Experiments} \label{Sec:results}
In this section, we test the proposed TT-rBKI algorithm on synthetic and real-world data  with and without noise, and make comparison with the related state-of-the-art methods, i.e., TT-SVD, TT-rSVD and TT-rSI.


	%

	
The quality of the reconstructed tensor is measured by the peak signal-to-noise ratio (PSNR) say $\rho_{\rm psnr}$ and relative error say $\epsilon_{\rm err}$. For tensor $\mathcal{A}\in \mathbb{R}^{I_1\times I_2 \times I_3}$ and its low-rank TT approximation $\hat{\mathcal{A}}$, the PSNR is defined as
\begin{equation*}
		\rho_{\rm psnr} =	10\cdot\log_{10}\dfrac{I_1 I_2 I_3 \Vert\hat{\mathcal{A}}\Vert_{\infty}^2}{\|\mathcal{A}-\hat{\mathcal{A}}\|_F^2}.
\end{equation*}
The relative error of the low-rank reconstruction tensor is defined as
\begin{equation*}
		\epsilon_{\rm err} =  \parallel \mathcal{A}- \hat{\mathcal{A}} \parallel_F / \left \| \mathcal{A}  \right \| _F .
\end{equation*}
All the codes are implemented in MATLAB with the Tensor Toolbox \cite{B2023} and TT-Toolbox-master. The calculations are run on a laptop with AMD 5800H CPU (2.50GHz) and 16GB RAM.

\subsection{Experiments on noise-free data}
We first test and compare different methods on synthetic and real-world data for the noise-free case.
	\begin{figure}[htbp!]	
        \centering	
		\includegraphics[trim={{1.1in} {0.2in} {1.25in} {.0in}}, clip, width=0.96\textwidth]{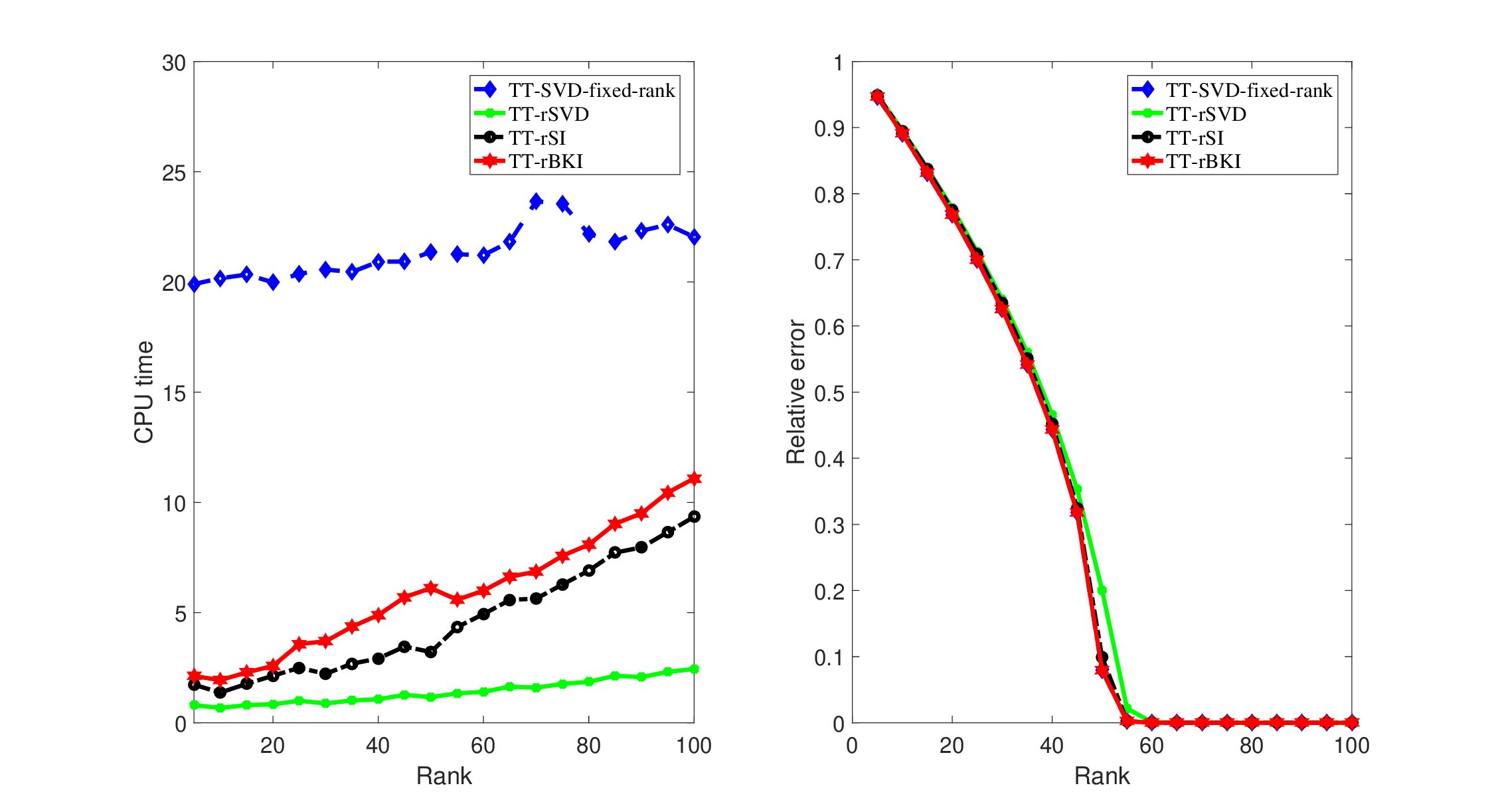}
		\caption{Results comparison on a synthetic tensor $\mathcal{A} \in \mathbb{R}^{500 \times 500 \times 500}$ with exponentially decaying spectrum in terms of CPU time (left) and relative error (right). Rank is change from 5 to 100 and the oversampling parameter is fixed at 2.}\label{fig2}	
	\end{figure}


\subsubsection{Spectrum decaying tensor}
We now test the performance of different algorithms on a synthetic tensor $\mathcal{A} \in \mathbb{R}^{500 \times 500 \times 500}$ with an exponentially decaying spectrum. The diagonal tensor with $j$-th frontal slice has the form of
	\begin{equation*}
		\mathcal{A}^{(j)} = {\rm diag}(\underbrace{1,1,\cdots,1}_{\min(T,j)},10^{-D},10^{-2D} , \cdots,10^{-(n-\min(T,j))D}).
	\end{equation*}
We fix $T =  50$ and $D = 1$. The results for this synthetic data are given in Figure \ref{fig2}. Figure \ref{fig2} shows that the relative error of TT-SVD and TT-rBKI is very similar, but TT-rBKI takes much less CPU time than TT-SVD. Moreover, TT-rSI and TT-rSVD cost less CPU time than TT-rBKI yet TT-rSVD achieves slightly worse accuracy than TT-rBKI when the rank is between 30 and 50.
	
 \begin{figure}[htbp!]
		\centering			\includegraphics[trim={{1.1in} {0.2in} {1.2in} {.0in}}, clip, width=0.96\textwidth]{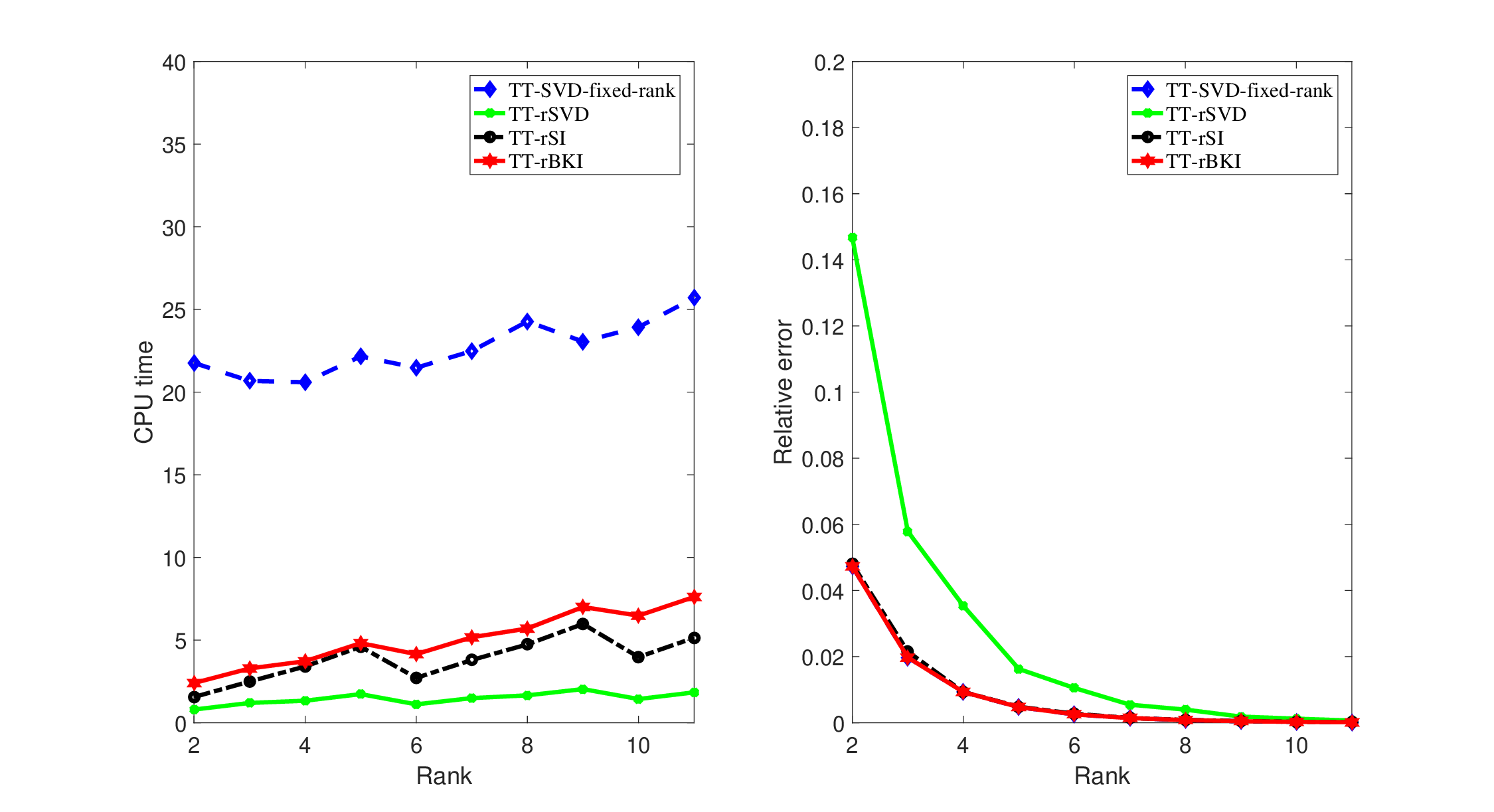}
		\caption{Results comparison on the power function data with size of $45\times45\times45\times45\times45$ in terms of the CPU time (left) and relative error (right). The target rank $[r,r,r,r]$ is changed from 2 to 11 and oversampling parameter p is fixed at 2.}\label{fig1}	
	\end{figure}
\subsubsection{Power function tensor}
A five-order power function tensor data can be generated by the following function
	\begin{equation} \label{eq-5order-functionaltensordatay}
		x_{i_1 i_2,\cdots,i_n}= \frac{1}{\sqrt[h]{i_1^h+i_2^h+\cdots+i_n^h}}, \ n=1,2,\cdots,N.
	\end{equation}
 In this synthetic data experiment, we fixed $N=5$, $h=5$, and the size of the dataset is $[45,45,45,45,45]$, while increasing the target rank $[r,r,r,r]$.  
	
	The results of the compared methods for this generated synthetic data are displayed in Figure \ref{fig1}. The right panel of the figure reveals that the relative error achieved by the TT-SVD, TT-rSI and TT-rBKI methods is very similar and much better than that of TT-rSVD. The left panel of Figure \ref{fig1} reveals that TT-rSI requires less CPU time in seconds than TT-rBKI, while achieving almost the same accuracy as TT-SVD. Moreover, Figure \ref{fig1} demonstrates that TT-rSVD is the fastest, yet its accuracy is much lower than the other methods, including our TT-rBKI.

	\subsubsection{Real-world data} \label{subsec:rwd-nf}
	In this subsection, we test different algorithms on three real-world tensors. The first real-world tensor is a gray video\footnote{http://trace.eas.asu.edu/yuv/index.html}, named hall-quif, with a size of $144\times176\times150$. The fixed TT-rank $[r_1,r_2] $ is chosen from $r_1 = r_2 = 10$ to $r_1 = r_2 = 20$.
	The second real-world tensor is derived from hyperspectral data\footnote{https://www.ehu.eus/ccwintco/index.php/Hyperspectral\_Remote\_Sensing\_Scenes} of size $1096 \times 715 \times 102$  The rank is chosen from $r_1=r_2=5$ to $r_1=r_2=50$.
	The third real-world tensor is a color video\footnote{https://github.com/a494626340/data}, i.e., a movie clip, with the size of $480 \times 848 \times 3 \times 147$. We permute the third dimension to the fourth dimension so that the number of frames can be transferred to the third dimension. The rank $[r_1,r_2,r_3]$ is chosen from $r_1 = r_2 = 5$ to $r_1 = r_2 = 50$ with $r_3 = 3$.
	The oversampling parameter was fixed at 5 for all three datasets.
	
	\begin{figure}[htbp!]
		\centering
		\includegraphics[trim={{1.1in} {0.2in} {1.2in} {.0in}}, clip, width=0.96\textwidth]{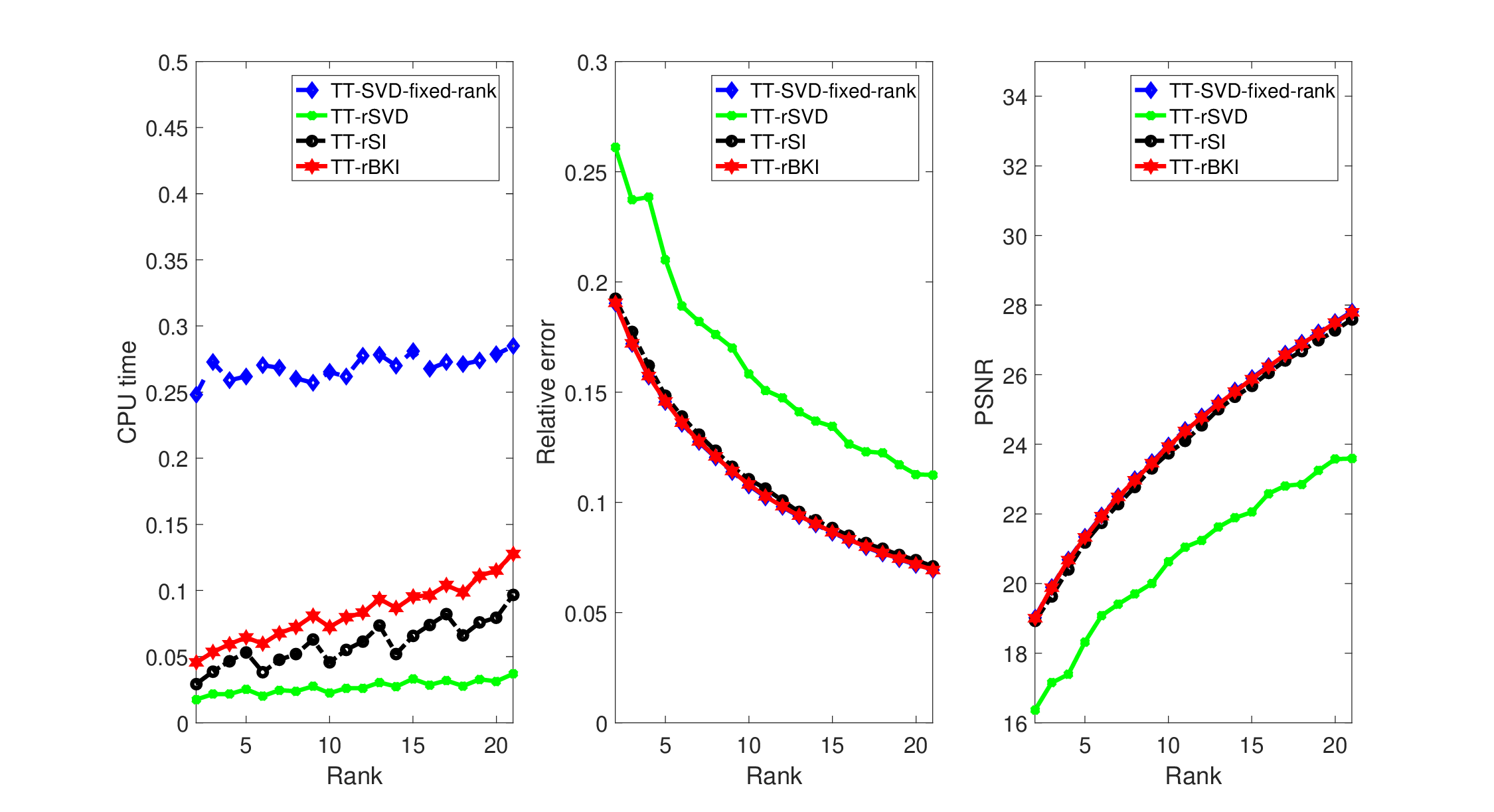}
		\caption{Results comparison on the gray video clip with size of $144\times176\times150$ in terms of the CPU time (left), relative error (middle) and PSNR (right). }
		\label{fig3}		
	\end{figure}

	\begin{figure}[htbp!]
		\centering
        \begin{tabular}{cc}
		\includegraphics[trim={{2.0in} {5.0in} {10.0in} {.5in}}, clip, width=1.4in, height = 1.2in]{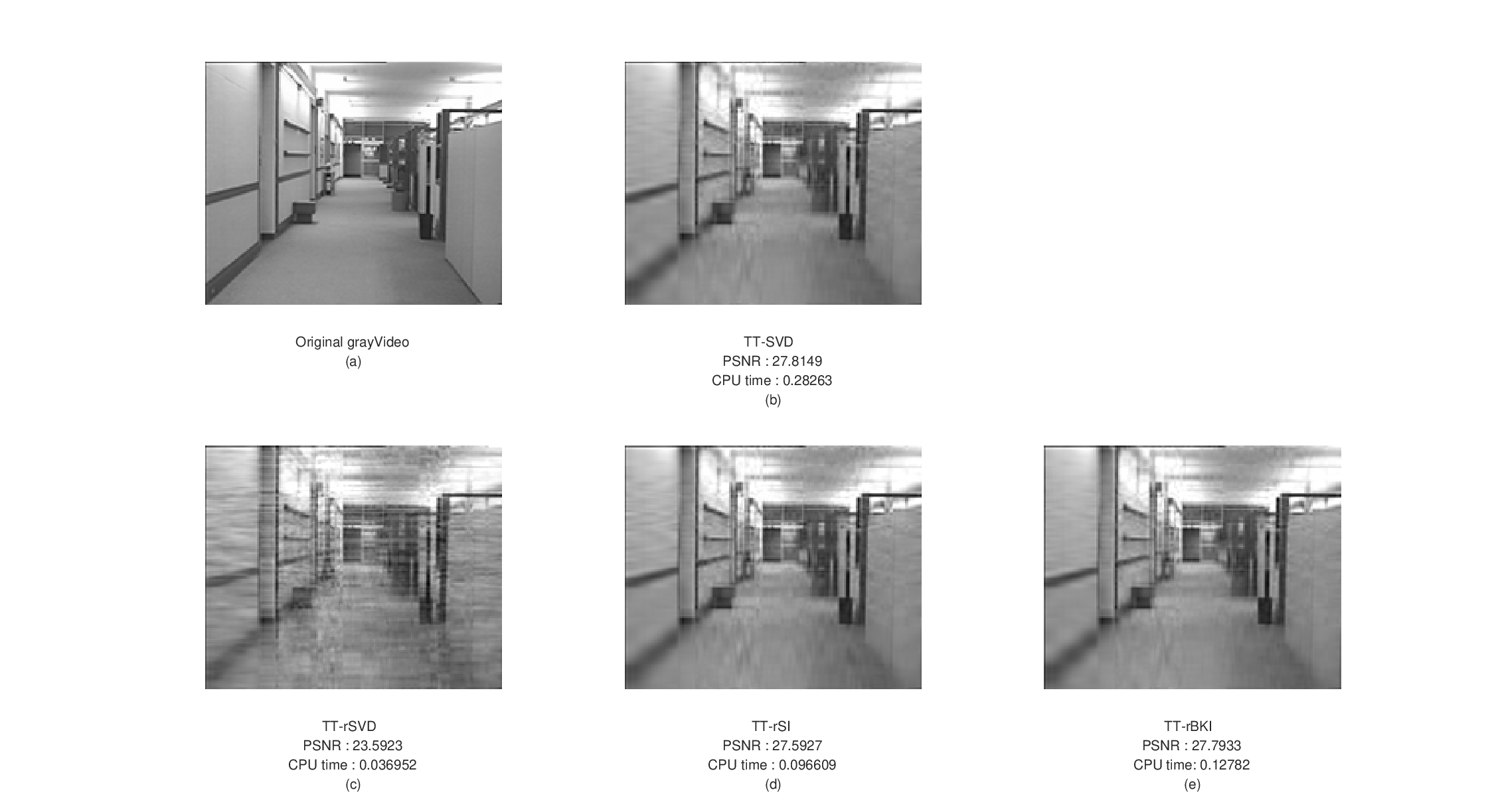} &
  	\includegraphics[trim={{6.25in} {5.0in} {5.8in} {.5in}}, clip, width=1.4in, height = 1.2in]{gray_video_pic.eps} \vspace{-0.05in} 
    \\
   {\footnotesize (a) Original gray video frame} & {\footnotesize (b) TT-SVD } \vspace{-0.05in} \\
    & {\footnotesize CPU: 0.28; PSNR: 27.8149} 
    \end{tabular}
    \begin{tabular}{ccc}
    	\includegraphics[trim={{2.0in} {1.0in} {10.0in} {4.4in}}, clip, width=1.4in, height = 1.2in]{gray_video_pic.eps} &
        \includegraphics[trim={{6.25in} {1.0in} {5.8in} {4.4in}}, clip, width=1.4in, height = 1.2in]{gray_video_pic.eps} &
        \includegraphics[trim={{10.4in} {1.0in} {1.5in} {4.4in}}, clip, width=1.4in, height = 1.2in]{gray_video_pic.eps} \vspace{-0.1in} \\
     {\footnotesize (c) TT-rSVD} & {\footnotesize (d) TT-rSI} & {\footnotesize (e) TT-rBKI} \vspace{-0.05in} \\
   {\footnotesize CPU: 0.04; PSNR: 23.5923} & {\footnotesize CPU: 0.10; PSNR: 27.5927} & {\footnotesize CPU: 0.13; PSNR: 27.7933}        
     \end{tabular}
		\caption{Low-rank approximation of one frame of the gray video clip by different methods.}
		\label{fig4}	
	\end{figure}

	\begin{figure}[htbp!]	
		\centering		\includegraphics[trim={{1.1in} {0.2in} {1.2in} {.0in}}, clip, width=0.96\textwidth]{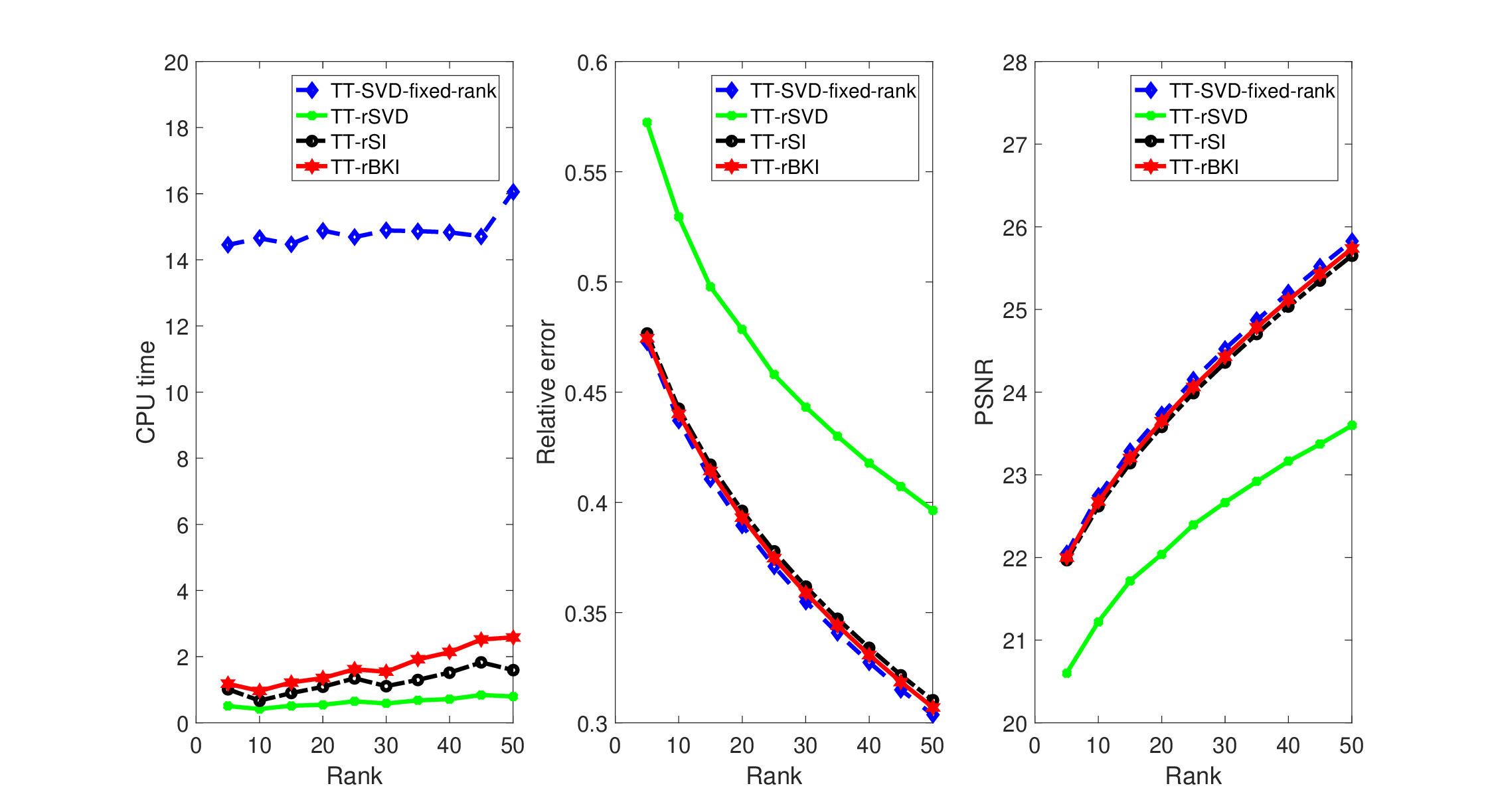}
		\caption{Results comparison on the hyperspectral tensor with size of $1096\times715\times102$ in terms of the CPU time (left), relative error (middle) and PSNR (right). }
		\label{fig5}
	\end{figure}

	\begin{figure}[htbp!]
		\centering
		\includegraphics[trim={{1.1in} {0.2in} {1.2in} {.0in}}, clip, width=0.96 \textwidth]{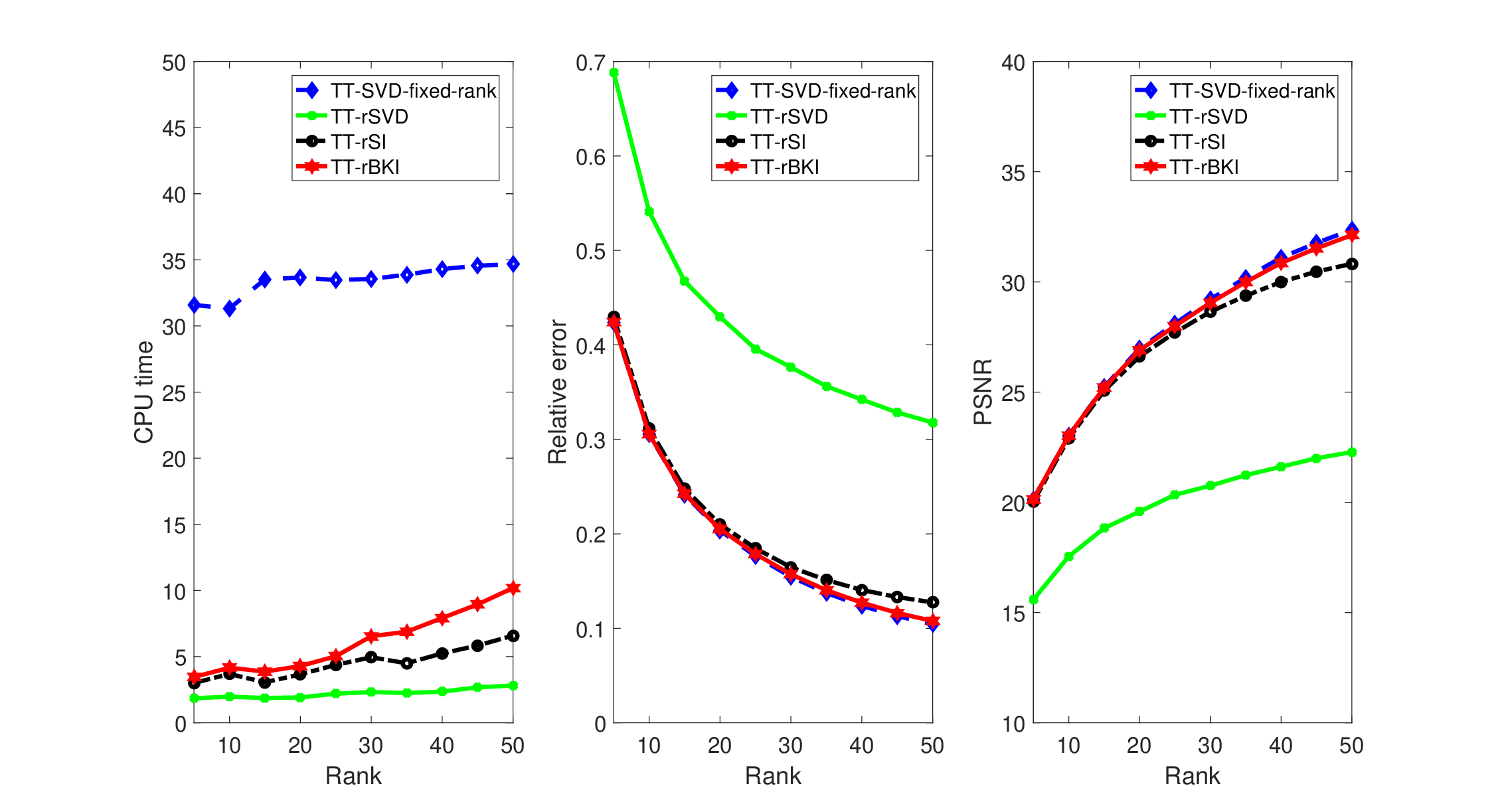}	
		\caption{Results comparison on the color video clip with size of $480\times848\times3\times147$ in terms of  CPU time (left), relative error (middle) and PSNR (right).}
		\label{fig:color-video-error}
	\end{figure}

	\begin{figure}[htbp!]	
		\centering
      \begin{tabular}{cc}
		\includegraphics[trim={{2.0in} {5.2in} {10.0in} {1.1in}}, clip, width=1.4in, height = 1.0in]{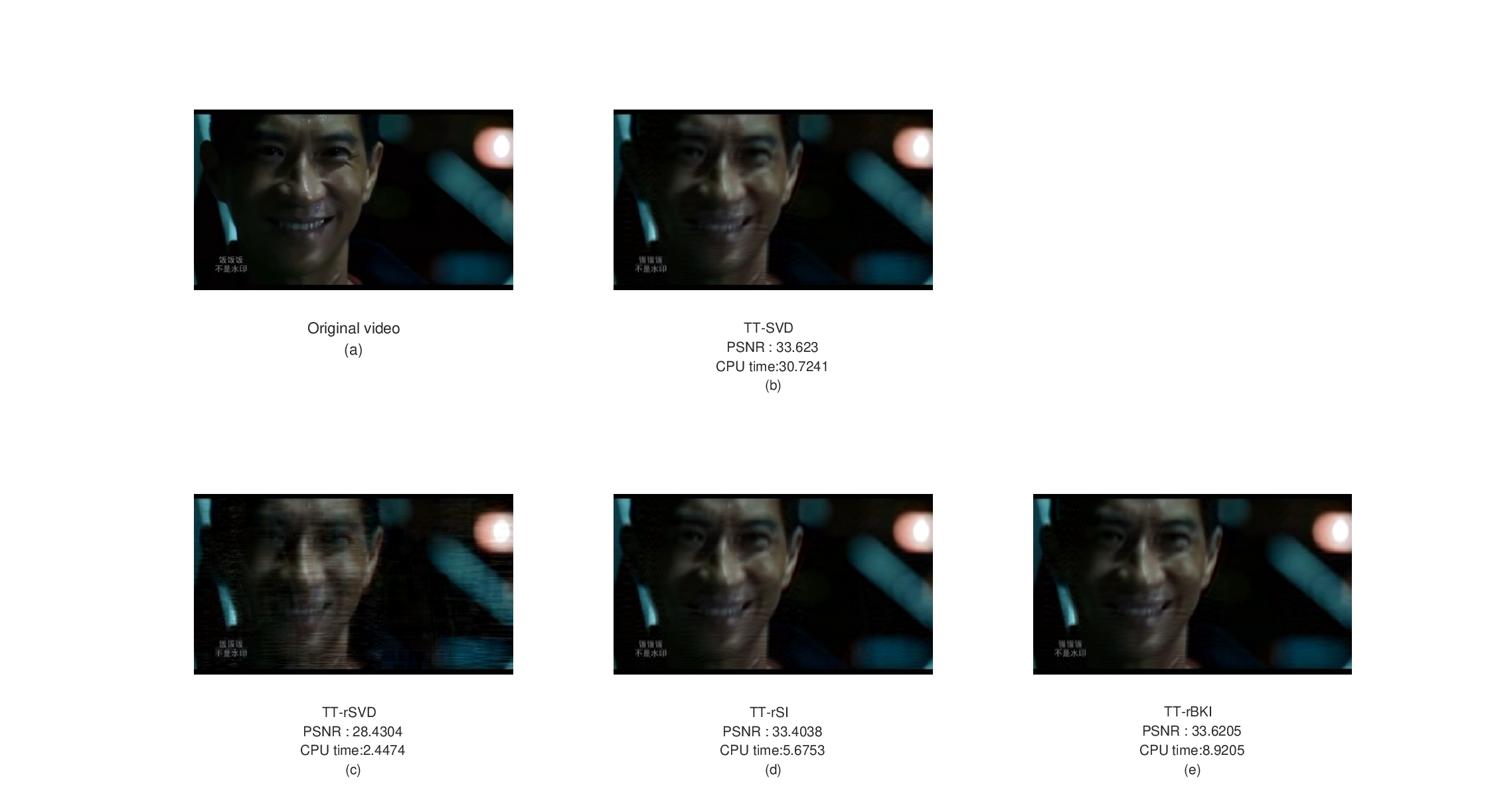} &
  	\includegraphics[trim={{6.25in} {5.2in} {5.8in} {1.1in}}, clip, width=1.4in, height = 1.0in]{video_5_pic.eps} \vspace{-0.05in} 
    \\
   {\footnotesize (a) Original color video frame} & {\footnotesize (b) TT-SVD } \vspace{-0.05in} \\
    & {\footnotesize CPU: 30.72; PSNR: 33.6230} 
    \end{tabular}
    \begin{tabular}{ccc}
    	\includegraphics[trim={{2.0in} {1.25in} {10.0in} {4.9in}}, clip, width=1.4in, height = 1.0in]{video_5_pic.eps} &
        \includegraphics[trim={{6.25in} {1.25in} {5.8in} {4.9in}}, clip, width=1.4in, height = 1.0in]{video_5_pic.eps} &
        \includegraphics[trim={{10.4in} {1.25in} {1.5in} {4.9in}}, clip, width=1.4in, height = 1.0in]{video_5_pic.eps} \vspace{-0.1in} \\
     {\footnotesize (c) TT-rSVD} & {\footnotesize (d) TT-rSI} & {\footnotesize (e) TT-rBKI} \vspace{-0.05in} \\
   {\footnotesize CPU: 2.45; PSNR: 28.4304} & {\footnotesize CPU: 5.68; PSNR: 33.4038} & {\footnotesize CPU: 8.92; PSNR: 33.6205} 
   \end{tabular}
		\caption{Low-rank approximation of one frame of the color video clip by different methods.}
		\label{fig:color-video-visual}
	\end{figure}

The comparison between different methods is again conducted in terms of the relative error, PSNR and CPU time. The experimental results on the gray video clip are shown in Figures \ref{fig3} and \ref{fig4}. The experimental results on the hyperspectral tensor are shown in Figure \ref{fig5}. The experimental results on the color video clip are shown in Figures \ref{fig:color-video-error} and \ref{fig:color-video-visual}.
	
As we can see from Figures \ref{fig3}--\ref{fig:color-video-visual}, TT-SVD takes more time on progressively larger data sets (i.e., tensor with larger size), while the time required by the randomized algorithms (i.e., TT-rSVD, TT-rSI and TT-rBKI) is much less and does not increase significantly. TT-rSVD, although it requires the shortest time, is not sufficiently accurate.  TT-rBKI takes slightly more time, but it achieves results that are closer to those of TT-SVD and are better than those of TT-rSVD and TT-rSI in terms of relative error. On the whole, the proposed TT-rBKI method benefits from the superiority of the randomized method and subspace iterations in obtaining nearly the same accuracy as that of TT-SVD by taking far less time.

\subsection{Experiments on noisy data}
We now test and compare different methods on real-world and synthetic data with Gaussian white noise. The built-in MATLAB function ${\tt awgn}$ is used to add noise.
The difference from noise-free data is that the noisy data are usually heavy-tailed.

\begin{figure}[htbp!]			
		\centering
\includegraphics[trim={{1.1in} {0.2in} {1.2in} {.0in}}, clip, width=0.96\textwidth]{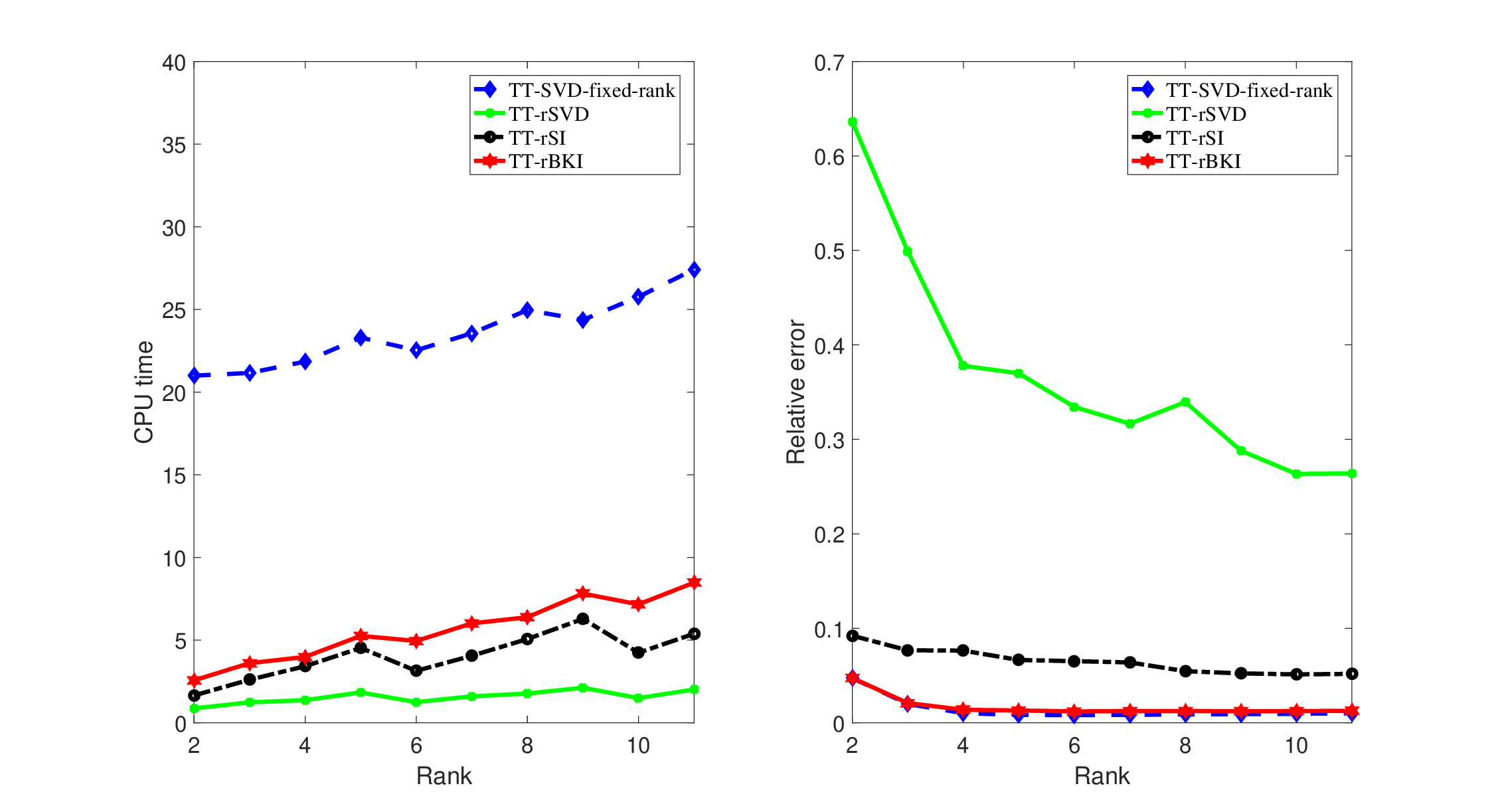}
		\caption{Results comparison on the noisy power function data with size of $45\times45\times45\times45\times45$ in terms of the CPU time (left) and relative error (right). The target rank $[r,r,r,r]$ is changed from 2 to 11 and the noise level is fixed at 5dB.}
		\label{fig8}					
	\end{figure}

	\begin{figure}[htbp!]	
		\centering	\includegraphics[trim={{1.1in} {0.2in} {1.2in} {.0in}}, clip, width=0.96\textwidth]{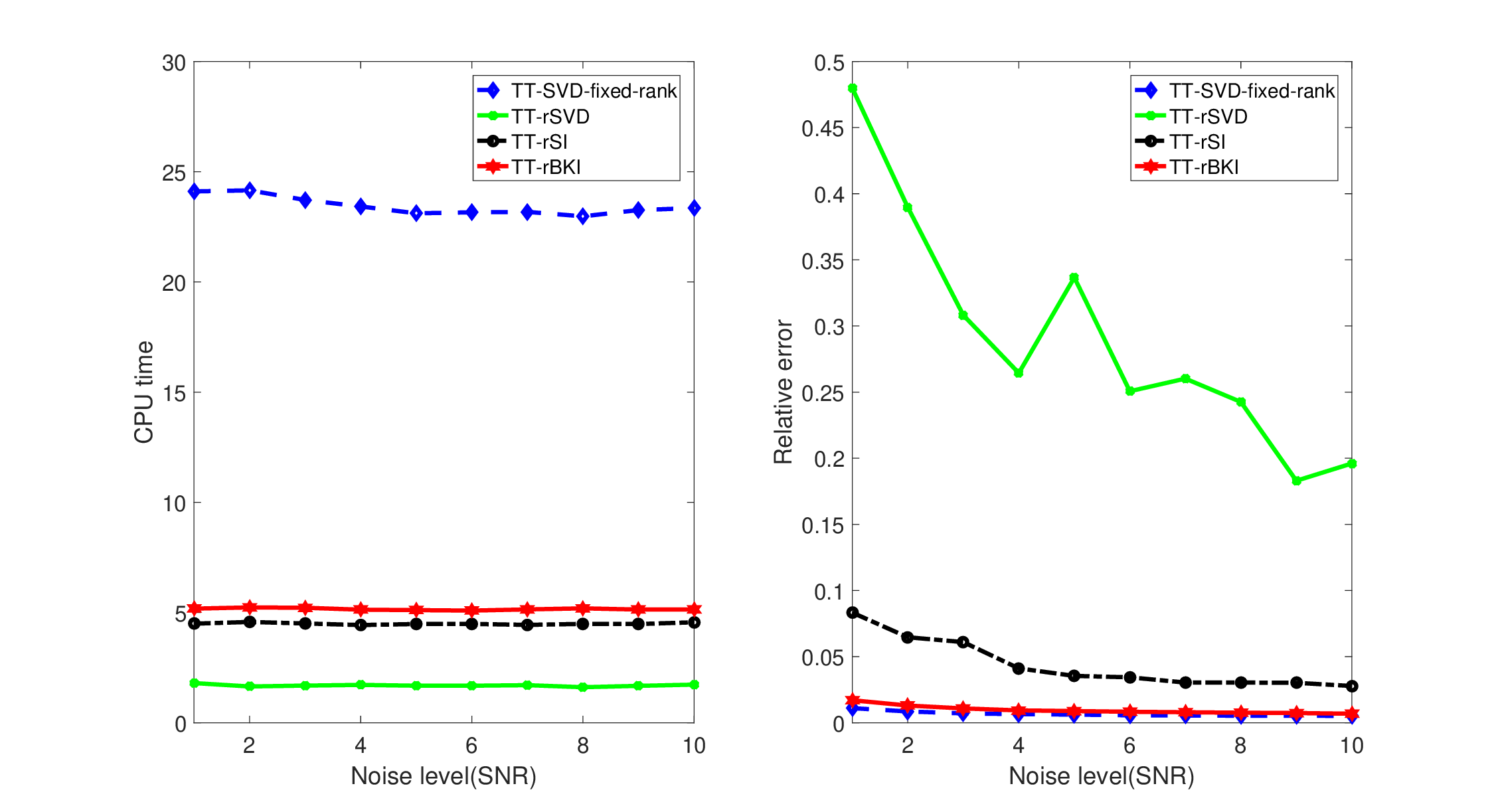}
		\caption{Results comparison on the noisy power function data with size of $45\times45\times45\times45\times45$ and decreased noise level in terms of the CPU time (left) and relative error (right). The target rank $[r,r,r,r]$ is fixed at $[5,5,5,5]$ and the oversampling parameter is fixed at 2.}
		\label{fig9}	
	\end{figure}

\subsubsection{Synthetic data}
We first conduct test to evaluate the impact of noise on the performance of randomized algorithms using synthetic data. The data of a five-order power function defined in Eq. (\ref{eq-5order-functionaltensordatay}) is generated with $N = 5$ and $h = 5$.  In this synthetic data experiment, we fix the noise level (i.e., SNR) at 5dB and keep increasing the target rank $[r,r,r,r]$. To illustrate the effect of high noise on individual methods, we fix the target rank and continuously attenuate the noise level.
The results are presented in Figures \ref{fig8} and \ref{fig9}. It shows that in the presence of noise, the proposed TT-rBKI achieves the best reconstruction quality among the randomized methods and requires much less time compared to TT-SVD. In comparison, TT-rSVD and TT-rSI achieve poor approximation results; and the higher the noise level, the worse the approximation.

	\subsubsection{Real-world data}
	Finally, we test all the methods on the real-world data with noise. We choose the same video clips used in Section \ref{subsec:rwd-nf}, i.e., the hall-quif gray video clip and the color movie clip, with the noise level fixed at 5dB and 2dB, respectively. The oversampling parameter was fixed at 5.

	\begin{figure}[htbp!]
		\centering			       \begin{tabular}{ccc}
		\includegraphics[trim={{2.0in} {5.0in} {10.0in} {.5in}}, clip, width=1.4in, height = 1.2in]{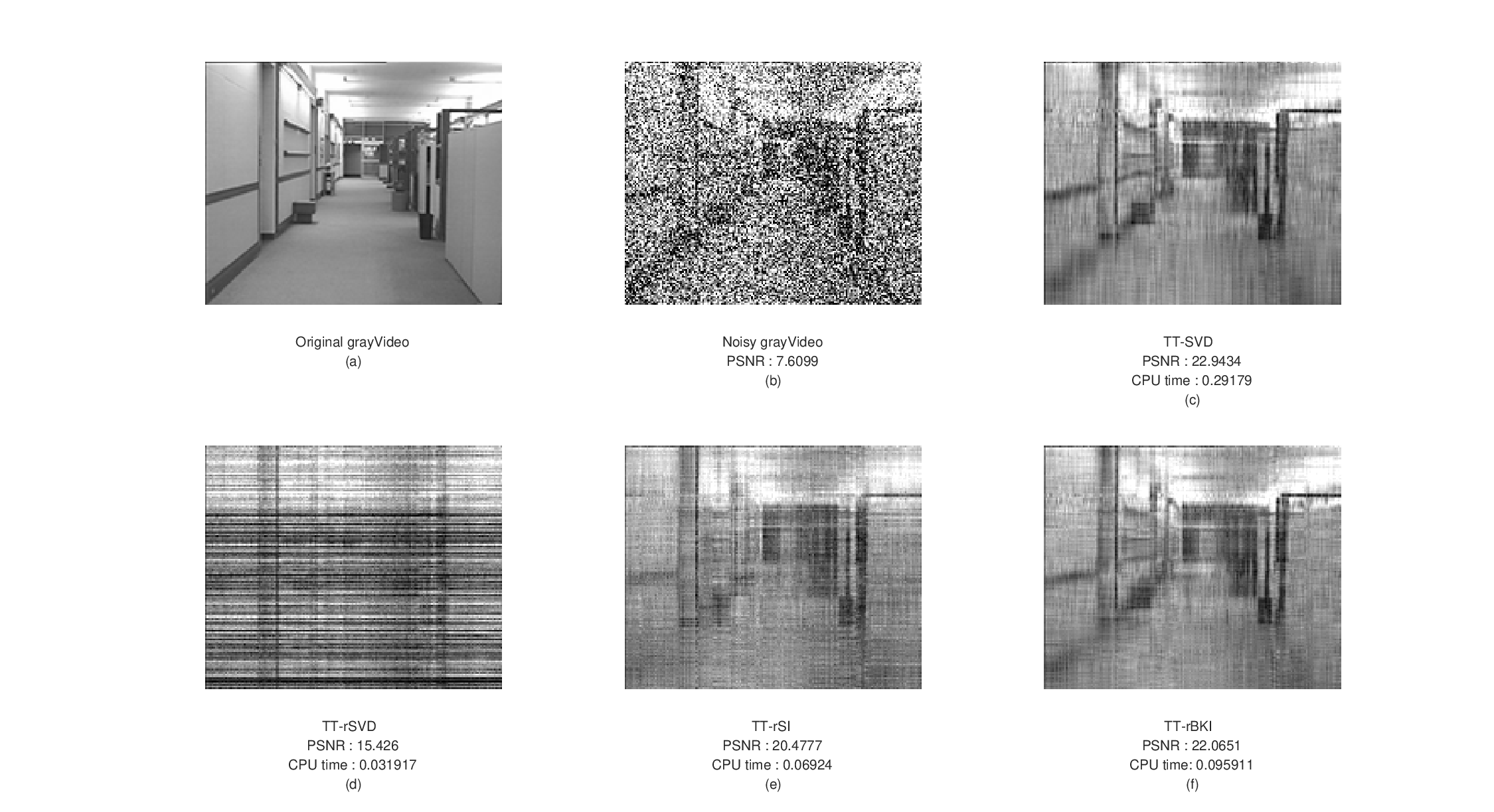} &
  	\includegraphics[trim={{6.25in} {5.0in} {5.8in} {.5in}}, clip, width=1.4in, height = 1.2in]{gray_video_SNR_2_pic.eps} \vspace{-0.05in} &
     \includegraphics[trim={{10.4in} {5.0in} {1.5in} {.5in}}, clip, width=1.4in, height = 1.2in]{gray_video_SNR_2_pic.eps} \vspace{-0.05in}
    \\
   {\footnotesize (a) Original gray video frame} & {\footnotesize (b) Noisy gray video frame} & {\footnotesize (c) TT-SVD } \vspace{-0.05in}  \\
    & {\footnotesize PSNR: 7.6099} & {\footnotesize CPU: 0.29; PSNR: 22.9434} \\
    	\includegraphics[trim={{2.0in} {1.0in} {10.0in} {4.4in}}, clip, width=1.4in, height = 1.2in]{gray_video_SNR_2_pic.eps} &
        \includegraphics[trim={{6.25in} {1.0in} {5.8in} {4.4in}}, clip, width=1.4in, height = 1.2in]{gray_video_SNR_2_pic.eps} &
        \includegraphics[trim={{10.4in} {1.0in} {1.5in} {4.4in}}, clip, width=1.4in, height = 1.2in]{gray_video_SNR_2_pic.eps} \vspace{-0.1in} \\
     {\footnotesize (d) TT-rSVD} & {\footnotesize (e) TT-rSI} & {\footnotesize (f) TT-rBKI} \vspace{-0.05in} \\
   {\footnotesize CPU: 0.03; PSNR: 15.4260} & {\footnotesize CPU: 0.07; PSNR: 20.4777} & {\footnotesize CPU: 0.10; PSNR: 22.0651}        
     \end{tabular}
		\caption{One frame of the reconstructed noisy gray video clip with size of $144\times176\times150$ by different methods. The noise level is fixed at 5dB and the rank is 13.  }
		\label{fig:gray-video-noise}	
	\end{figure}

	\begin{figure}[htbp!]			
		\centering
		\includegraphics[trim={{1.1in} {0.2in} {1.2in} {.0in}}, clip, width=0.95\textwidth]{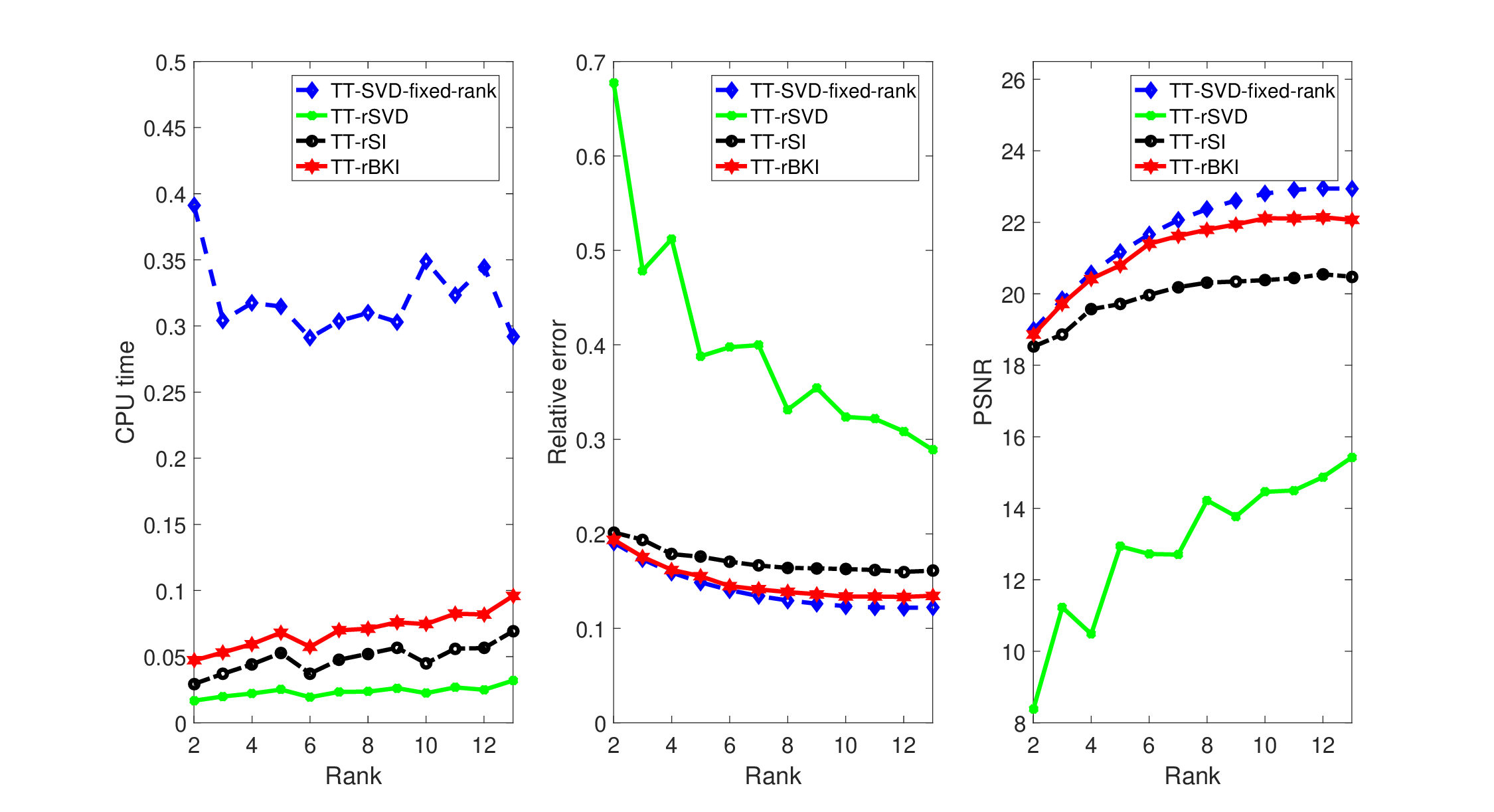}
		\caption{Results comparison on the noisy gray video clip with the size of $144\times176\times150$ by different methods in terms of the CPU time (left), relative error (middle) and PSNR (right). The noise level is fixed at 5dB. }	
		\label{fig11}
	\end{figure}

	\begin{figure}[htbp!]	
		\centering
		\includegraphics[trim={{1.1in} {0.2in} {1.2in} {.0in}}, clip, width=0.95\textwidth]{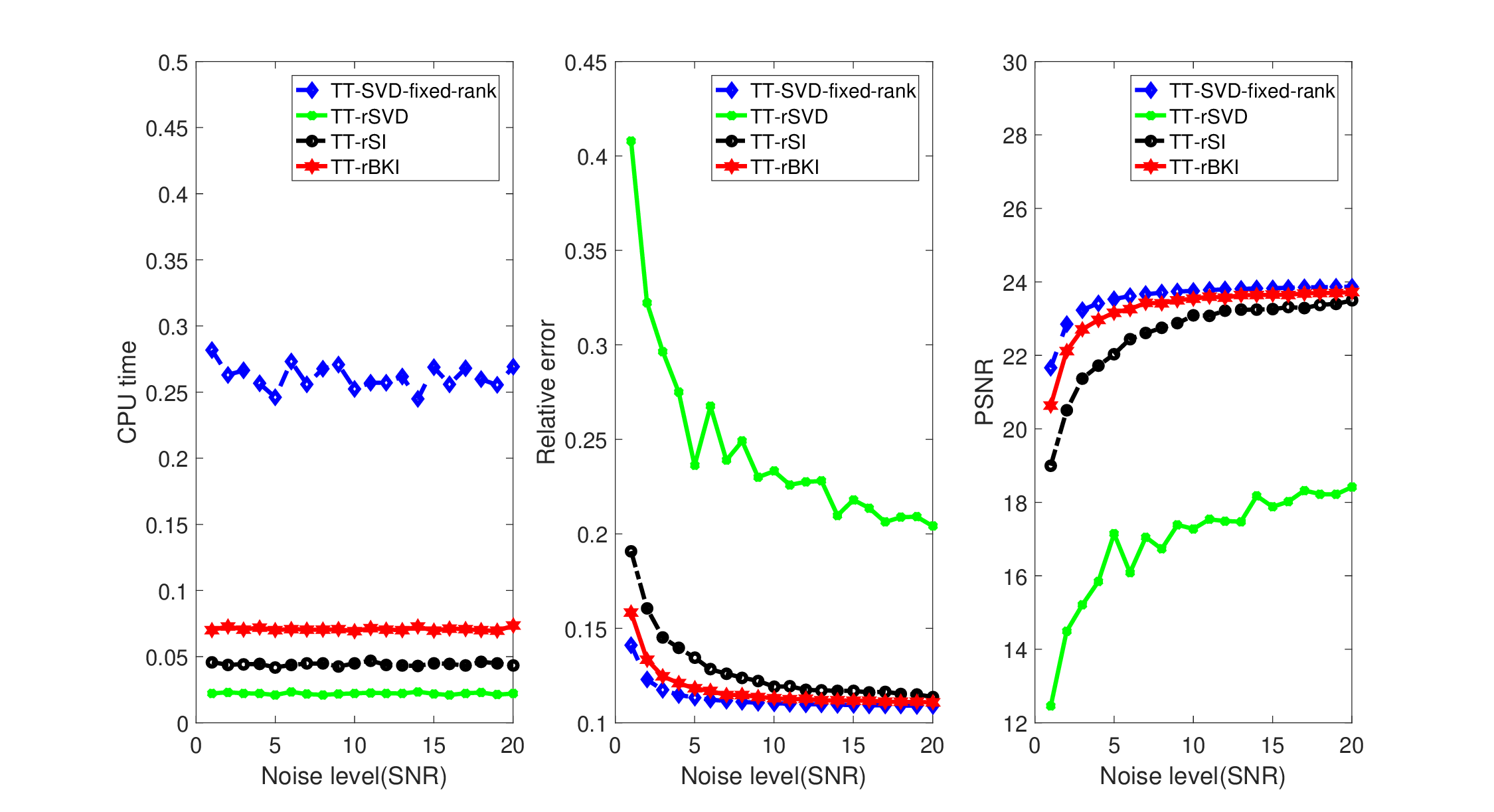}
		\caption{Results comparison on the noisy gray video clip with size of $144\times176\times150$ and decreased noise level by different methods in terms of the CPU time (left), relative error (middle) and PSNR (right). The noise level is changed from 1 to 20 and the rank is fixed at 10.}
		\label{fig12}	
	\end{figure}

	\begin{figure}[htbp!]	
		\centering			      
  \begin{tabular}{ccc}
		\includegraphics[trim={{2.0in} {5.2in} {10.0in} {1.1in}}, clip, width=1.4in, height = 1.0in]{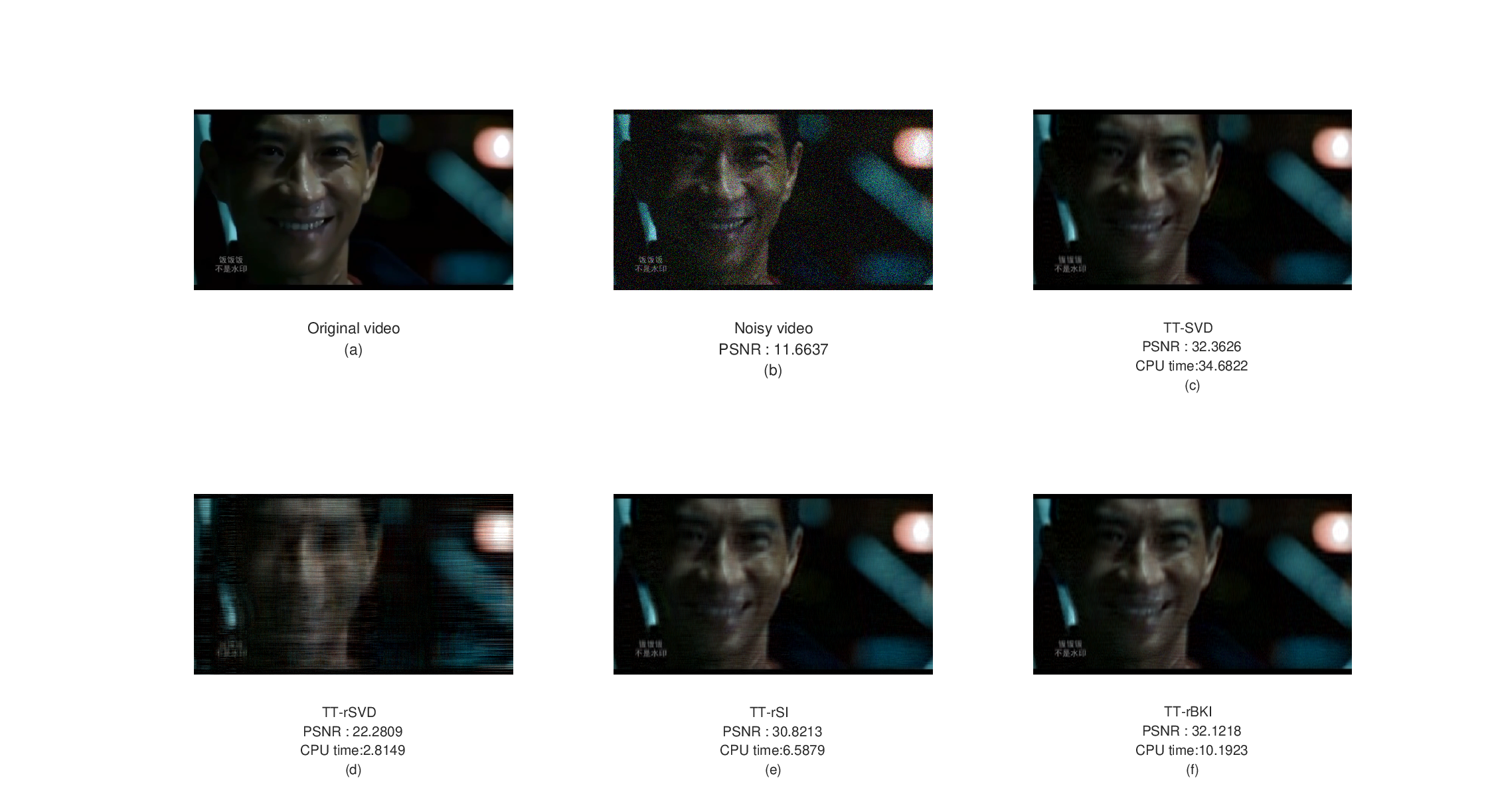} &
  	\includegraphics[trim={{6.25in} {5.2in} {5.8in} {1.1in}}, clip, width=1.4in, height = 1.0in]{Video_fixed_snr_increased_rank_pic.eps} &
   \includegraphics[trim={{10.4in} {5.2in} {1.5in} {1.1in}}, clip, width=1.4in, height = 1.0in]{Video_fixed_snr_increased_rank_pic.eps}
   \vspace{-0.05in} 
    \\
   {\footnotesize (a) Original color video frame} & {\footnotesize (b) Noisy video frame} &{\footnotesize (c) TT-SVD } \vspace{-0.05in} \\
    & {\footnotesize PSNR: 11.6637} & {\footnotesize CPU: 34.68; PSNR: 32.3626}  \\
    	\includegraphics[trim={{2.0in} {1.25in} {10.0in} {4.9in}}, clip, width=1.4in, height = 1.0in]{Video_fixed_snr_increased_rank_pic.eps} &
        \includegraphics[trim={{6.25in} {1.25in} {5.8in} {4.9in}}, clip, width=1.4in, height =1.0in]{Video_fixed_snr_increased_rank_pic.eps} &
        \includegraphics[trim={{10.4in} {1.25in} {1.5in} {4.9in}}, clip, width=1.4in, height = 1.0in]{Video_fixed_snr_increased_rank_pic.eps} \vspace{-0.1in} \\
     {\footnotesize (d) TT-rSVD} & {\footnotesize (e) TT-rSI} & {\footnotesize (f) TT-rBKI} \vspace{-0.05in} \\
   {\footnotesize CPU: 2.81; PSNR: 22.2809} & {\footnotesize CPU: 6.59; PSNR: 30.8213} & {\footnotesize CPU: 10.19; PSNR: 32.1218} 
   \end{tabular}
		\caption{One frame of the reconstructed noisy color video clip with size of $480\times848\times3\times147$ by different methods. The noise level is fixed at 2dB.  }
		\label{fig13}		
	\end{figure}

	\begin{figure}[htbp!]			
		\centering
		\includegraphics[trim={{1.1in} {0.2in} {1.2in} {.0in}}, clip, width=0.95\textwidth]{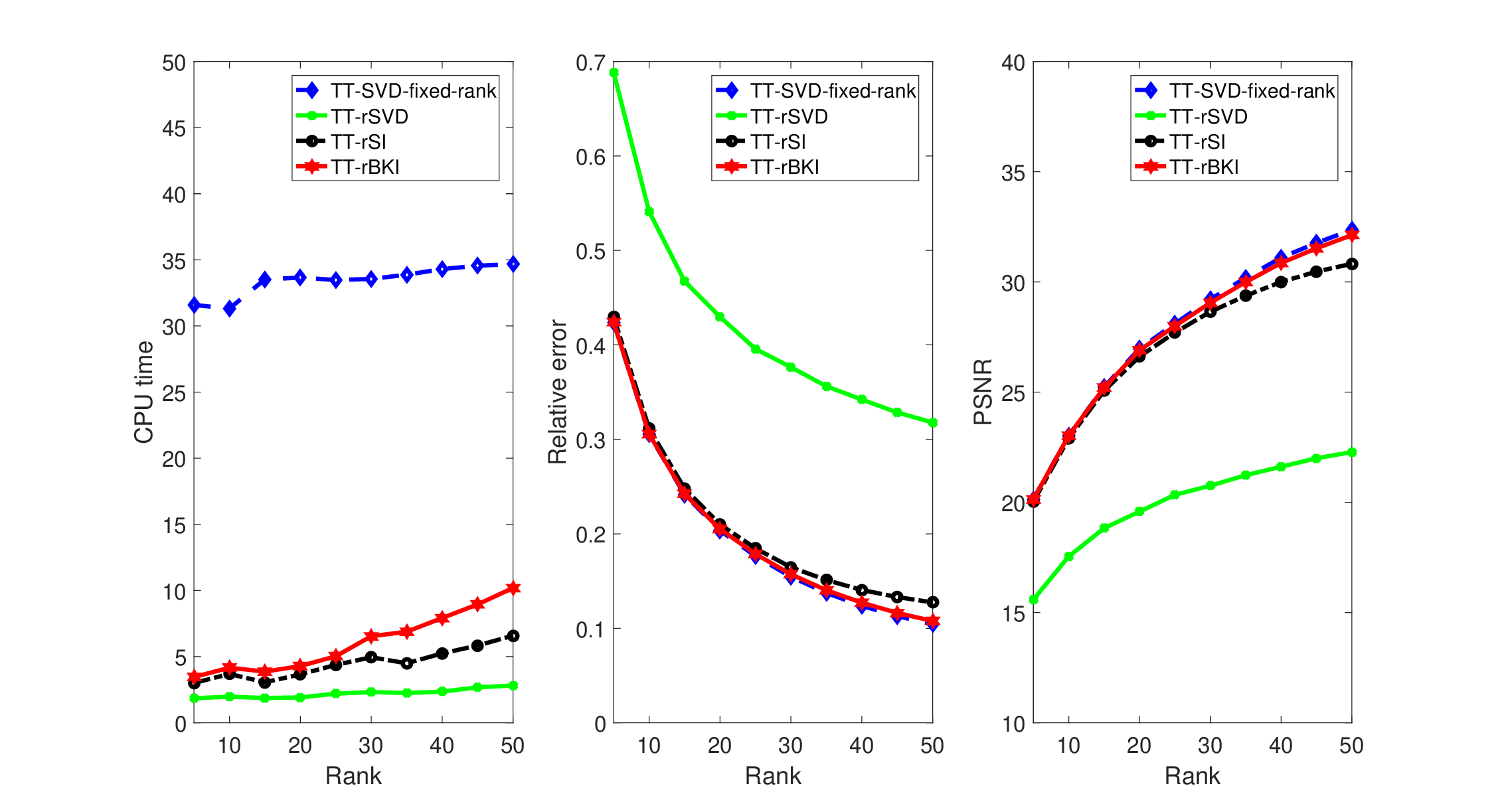}
		\caption{Results comparison on the noisy color video clip with size of $480\times848\times3\times147$ by different methods in terms of the CPU time (left), relative error (middle) and PSNR (right). The noise level is fixed at 2dB. }
		\label{fig14}		
	\end{figure}

	\begin{figure}[htbp!]	
		\centering
		\includegraphics[trim={{1.1in} {0.2in} {1.2in} {.0in}}, clip, width=0.95\textwidth]{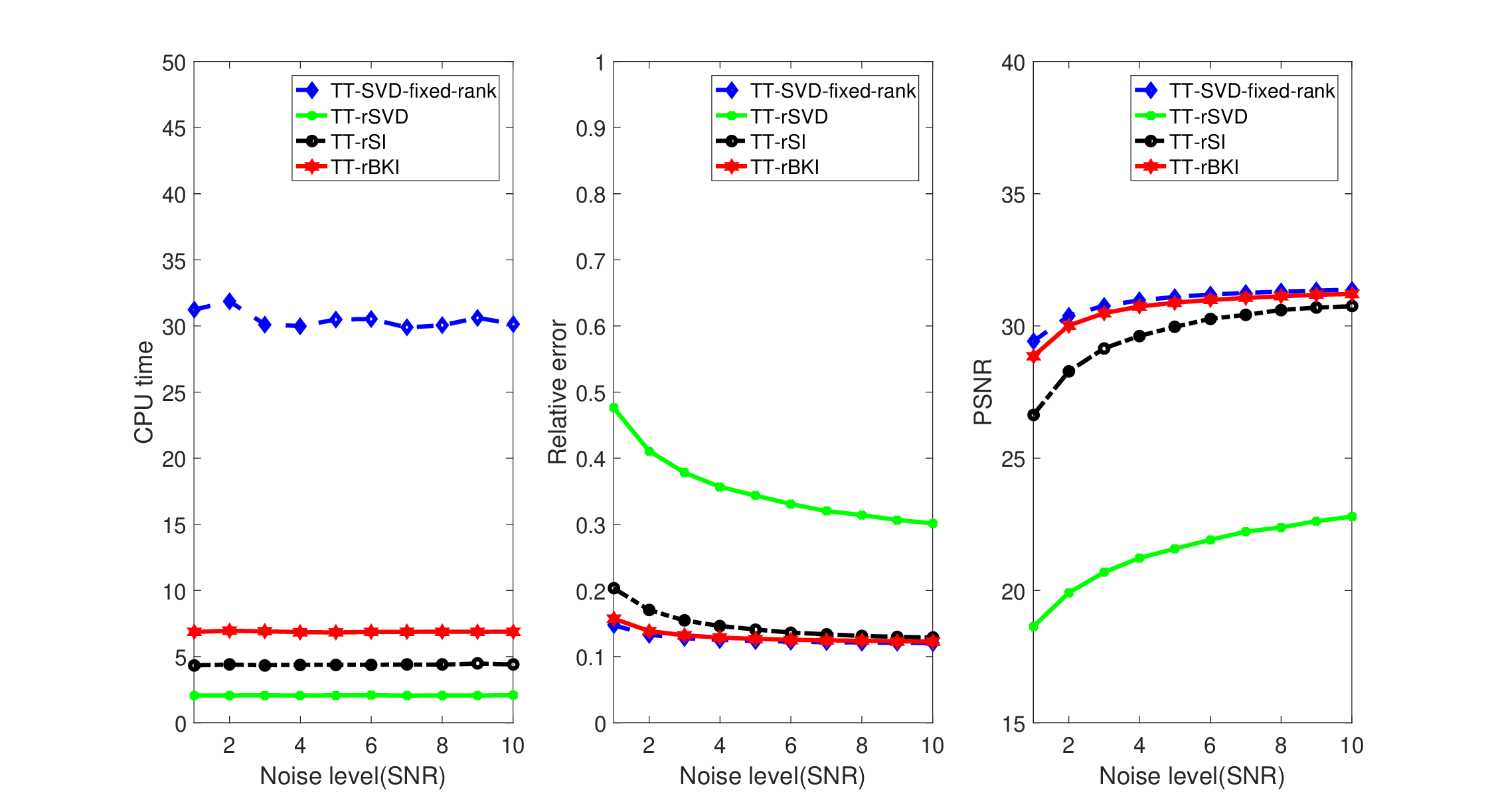}	
		\caption{Results comparison on the noisy color video clip with size of $480\times848\times3\times147$ and decreased noise level (SNR) by different methods in terms of the CPU time (left), relative error (middle) and PSNR (right). The noise level is changed from 1 to 10 and the target rank is fixed at [40,40,3]. }
		\label{fig15}		
	\end{figure}

	The experimental results are presented in Figures \ref{fig:gray-video-noise}--\ref{fig15}. In the video clips, a frame-showcase is also given in Figures \ref{fig:gray-video-noise} and \ref{fig13}; it is evident that TT-rSVD performs worse than other methods in noisy data, as the approximation it produces is unclear. All the results in Figures \ref{fig11} and \ref{fig14} also indicate that the proposed TT-rBKI is superior to TT-rSI, as it has less tail energy loss in its approximation, although it takes slightly more time. Furthermore, it is evident that the reconstruction quality of TT-rBKI, which requires much less time than TT-SVD, is closest to that of TT-SVD. Overall, the proposed TT-rBKI method performs the best among the randomized algorithms, with approximate accuracy comparable to TT-SVD but only requiring 1/5 of the CPU time used by TT-SVD. 
	Figures \ref{fig12} and \ref{fig15} give the results of the comparison of all methods at different noise levels while fixing the rank. Consistent results are obtained. In particular, the higher the noise level (i.e., the smaller the SNR), the more obvious the difference. On the whole, the proposed TT-rBKI achieves the best approximate accuracy among the randomized algorithms in all cases.

	
	\section{Conclusion} \label{Sec:con}
	In this paper, we proposed a randomized block Krylov subspace iteration, referred to as TT-rBKI, for TT approximation and validated its accuracy with a proof of its error bound. We conducted numerous experiments and comparisons on synthetic and real-world data with and without noise to demonstrate its effectiveness. The numerical results showed that the block Krylov subspace iteration can indeed significantly improve the accuracy of the randomized TT approximation. Furthermore, compared to other existing randomized algorithms, the proposed TT-rBKI is more reliable, particularly when dealing with noisy tensor data.
	
	\section*{Acknowledgements}
	This work was supported in part by National Natural Science Foundation of China (No. 12071104) and Natural Science Foundation of Zhejiang Province (No. LY22A010012, No. LD19A010002).

\section*{Ethics Declarations}
 We declare that we have no commercial or associative interest that represents a conflict of interest in connection with the work submitted.

 \section*{Data Availability}
All underlying data sets can be made available upon request or are publicly available. MATLAB codes used in this paper are available upon request to the authors. 
	


\begin{thebibliography}{}
		\bibitem{ASL2020}
		Ahmadi-Asl S, Cichocki A, Phan A H, et al. Randomized algorithms for fast computation of low rank tensor ring model[J]. Machine Learning: Science and Technology, 2020, 2(1): 011001.
		
		\bibitem{ASLTucker}
		Ahmadi-Asl S, Abukhovich S, Asante-Mensah M G, et al. Randomized algorithms for computation of Tucker decomposition and higher order SVD (HOSVD)[J]. IEEE Access, 2021, 9: 28684-28706.
		
		\bibitem{B2017}
		Batselier K, Yu W, Daniel L, et al. Computing low-rank approximations of large-scale matrices with the tensor network randomized SVD[J]. SIAM Journal on Matrix Analysis and Applications, 2018, 39(3): 1221-1244.
		
		\bibitem{B2023}
		Brett B, Tamara K, et al., Tensor Toolbox for MATLAB, Version 3.5, www.tensortoolbox.org, February 25, 2023.
		
		\bibitem{comon2002}
		Comon P., Tensor decompositions, in Mathematics in Signal Processing V, J. G. McWhirter, I. K. Proudler, Eds., pp. 1–24. Clarendon Press, Oxford, UK, 2002.

		
		
       \bibitem{Che2018}
		Che M, Wei Y. Randomized algorithms for the approximations of Tucker and the tensor train decompositions[J]. Advances in Computational Mathematics, 2019, 45(1): 395-428.       
       
		\bibitem{Che2020}
		Che M, Wei Y,  Yan H. The computation of low multilinear rank approximations of tensors via power scheme and random projection. SIAM Journal on Matrix Analysis Applications. 41(2), 605-636 (2020)
		
		\bibitem{Che2021}
		Che M, Wei Y, Yan H. An efficient randomized algorithm for computing the approximate Tucker decomposition[J]. Journal of Scientific Computing, 2021, 88(2): 32.
		
		\bibitem{czm2018}
		Chen Z, Batselier K, Suykens K, et al. Parallelized tensor train learning of polynomial classifiers [J]. IEEE transactions on neural networks and learning systems, 2017, 29(10): 4621--4632.
		
		\bibitem{Daas2023}
		Daas A, Ballard G, Cazeaux P, et al. Randomized algorithms for rounding in the tensor-train format [J]. SIAM Journal on Scientific Computing, 2023, 45(1): A74--A95.
		
		\bibitem{Daas2020}
		Daas H A, Ballard G, Benner P. Parallel algorithms for tensor train arithmetic[J]. SIAM Journal on Scientific Computing, 2022, 44(1): C25-C53.
       \bibitem{DYQC2023}
         Dong W, Yu G, Qi L, et al. Practical Sketching Algorithms for Low-Rank Tucker Approximation of Large Tensors[J]. Journal of Scientific Computing, 2023, 95(2): 52.
		
		\bibitem{Gu2014}
		Gu M. Subspace iteration randomization and singular value problems[J]. SIAM Journal on Scientific Computing, 2015, 37(3): A1139-A1173.

       \bibitem{Halko2011}
		Halko N, Martinsson P G, Tropp J A. Finding structure with randomness: Probabilistic algorithms for constructing approximate matrix decompositions[J]. SIAM review, 2011, 53(2): 217-288.
		
		\bibitem{Huber2017}
		Huber B, Schneider R, Wolf S. A randomized tensor train singular value decomposition[C]//Compressed Sensing and its Applications: Second International MATHEON Conference 2015. Springer International Publishing, 2017: 261-290.
		
		
		
		\bibitem{kilmer2011}
		M. Kilmer and C.D. Martin, Factorization strategies for third-order tensors, Linear Algebra and Its Applications 435 (2011) 641-658.
		
		\bibitem{kolda}
		Kolda T G, Bader B W. Tensor decompositions and applications[J]. SIAM review, 2009, 51(3): 455-500.
		
		\bibitem{STTA}
		Kressner D,  Vandereycken B, and Voorhaar R. Streaming Tensor Train Approximation [J]. arXiv e-prints, 2022.
		
		
		\bibitem{Li2022}
		Li L, Yu W, and Batselier K. Faster tensor train decomposition for sparse data [J]. Journal of Computational and Applied Mathematics, 2022, 405: 113972.
		
		\bibitem{Musco2015}
		Musco C, Musco C. Randomized block krylov methods for stronger and faster approximate singular value decomposition[J]. Advances in neural information processing systems, 2015, 28.
		
		
		
		\bibitem{Oseledets2011}
		Oseledets, I. V.: Tensor-train decomposition. SIAM Journal on Scientific Computing, 2011, 33(5): 2295--2317.		
		
			
		\bibitem{QiL and Yu}
		Qi L, Yu G. T-singular values and t-sketching for third order tensors[J]. arXiv preprint arXiv:2103.00976, 2021.
		
		\bibitem{Q2022}
		Qiu Y, Sun W, Zhou G, et al. Towards Efficient and Accurate Approximation: Tensor Decomposition Based on Randomized Block Krylov Iteration [J]. arXiv preprint arXiv:2211.14828, 2022.
  

        \bibitem{s2021}
		Shi T,  Ruth M,  and Townsend A. Parallel algorithms for computing the tensor-train decomposition [J]. arXiv e-prints, 2021.

		
		\bibitem{SunY2020}
		Sun Y, Guo Y, Luo C, et al. Low-rank tucker approximation of a tensor from streaming data[J]. SIAM Journal on Mathematics of Data Science, 2020, 2(4): 1123--1150.		 
		
		\bibitem{Tropp2018}
		Tropp A. Analysis of randomized block Krylov methods, Technical Report No. 2018-02, 2021.

		
		\bibitem{Tropp2017}
		Tropp A, Yurtsever A, Udell M, et al. Practical sketching algorithms for low-rank matrix approximation [J]. SIAM Journal on Matrix Analysis and Applications, 2017, 38(4): 1454--1485.
		
		\bibitem{Tropp2019}
		Tropp J A, Yurtsever A, Udell M, et al. Streaming low-rank matrix approximation with an application to scientific simulation[J]. SIAM Journal on Scientific Computing, 2019, 41(4): A2430-A2463.
		
		\bibitem{Wang2015}
		Wang S, Zhang Z, Zhang T. Improved analyses of the randomized power method and block Lanczos method[J]. arXiv preprint arXiv:1508.06429, 2015.	
		
		\bibitem{wolf2019}
		Wolf A. Low rank tensor decompositions for high dimensional data approximation, recovery and prediction[D]. Technische Universitat Berlin, 2019.
  
       \bibitem{YuLi2022}
       Yu Y, Li H. Practical sketching-based randomized tensor ring decomposition[J]. arXiv preprint arXiv:2209.05647, 2022.

       \bibitem{YuLi2023}
        Yu Y, Li H, Zhou J. Block-Randomized Stochastic Methods for Tensor Ring Decomposition[J]. arXiv preprint arXiv:2303.16492, 2023.

       \bibitem{Lyuan2019}
		Yuan L, Li C, Cao J, et al. Randomized tensor ring decomposition and its application to large-scale data reconstruction[C]//ICASSP 2019-2019 IEEE International Conference on Acoustics, Speech and Signal Processing (ICASSP). IEEE, 2019: 2127-2131.

       \bibitem{Jzhang2018}
		Zhang J, Saibaba A K, Kilmer M E, et al. A randomized tensor singular value decomposition based on the t-product[J]. Numerical Linear Algebra with Applications, 2018, 25(5): e2179.
		
        \bibitem{Zhao21}
        Zhang H, Zhao X L, Jiang T X, et al. Multiscale feature tensor train rank minimization for multidimensional image recovery[J]. IEEE Transactions on Cybernetics, 2021, 52(12): 13395-13410.
       
        \bibitem{Zhao22}
        Zhang H, Huang T Z, Zhao X L. A fast sketching-based algorithm for rank-(L, L, 1) block term decomposition[J]. Applied Mathematics Letters, 2022, 133: 108252.

       \bibitem{TR2016}
        Zhao Q, Zhou  G, Xie S, Zhang  L, Cichocki  A, Tensor Ring Decomposition, 	arXiv:1606.05535, 2016.
		
		
	\end{thebibliography}
\end{document}